\documentclass[a4paper, DIV=12, 11pt]{amsart}
\usepackage[latin1]{inputenc}
\usepackage{amssymb, amsthm, amsmath, thmtools, mathtools}
\usepackage{amsfonts}
\usepackage[abbrev]{amsrefs} %modifies citations in theorems

\usepackage{graphicx}
\usepackage{thmtools}
\usepackage{hyperref}
\usepackage[bottom, marginal]{footmisc}
\usepackage{todonotes}

\declaretheoremstyle[headfont=\normalfont]{normalhead}
\newtheorem{lemma}{Lemma}[section]
\newtheorem{theorem}[lemma]{Theorem}
\newtheorem{proposition}[lemma]{Proposition}
\newtheorem{corollary}[lemma]{Corollary}
\newtheorem{definition}[lemma]{Definition}
\newtheorem{remark}[lemma]{Remark}

\newtheorem*{acknowledgement}{Acknowledgement}

\newcounter{mt}

\newtheorem{maintheorem}[mt]{Theorem}

\newcommand{\R}{\mathbb{R}}

\DeclareMathOperator{\vol}{vol}

\DeclareMathOperator{\diam}{diam}

\DeclareMathOperator{\GL}{GL}

\DeclareMathOperator{\Aff}{\mathrm{Aff}}

\DeclareMathOperator{\SO}{\mathrm{SO}}

\DeclareMathOperator{\SL}{\mathrm{SL}}

\newcommand{\calP}{\mathcal{P}}

\newcommand{\K}{\mathcal{K}}

\renewcommand{\S}{\mathbb{S}}
\DeclareMathOperator{\CV}{CV}

\numberwithin{equation}{section}

\author{Jonas Knoerr}
\title{Isometry invariant valuations on spherical polytopes}
\date{}

\newcommand{\Addresses}{{% additional braces for segregating \footnotesize
		\bigskip
		\footnotesize
		
		Jonas Knoerr, \textsc{Institute of Discrete Mathematics and Geometry, TU Wien, Wiedner Hauptstrasse 8-10, 1040 Wien, Austria}\par\nopagebreak
		\textit{E-mail address}: \texttt{jonas.knoerr@tuwien.ac.at}
		
		\medskip
	}}
	
\makeatletter
\def\blfootnote{\xdef\@thefnmark{}\@footnotetext}
\makeatother

\makeindex
\begin{document}
\maketitle
\begin{abstract}
	We show that every continuous and isometry invariant valuation on spherical polytopes is a linear combination of the spherical intrinsic volumes. The proof relies on a weak differentiability property satisfied by valuations on polytopes in $\mathbb{R}^n$ with a natural smoothness property with respect to the action of the affine group. This enables us to transfer several results established by Alesker for quasi-smooth valuations to the polytopal setting, and to reduce the problem to the translation invariant case of measurable valuations on polytopes.
\end{abstract}
\blfootnote{2020 \emph{Mathematics Subject Classification}. 52B45, 52A55, 52B11.\\
	\emph{Key words and phrases}. spherical polytopes, valuation, intrinsic volumes.\\}
\tableofcontents

\section{Introduction}

	Some of the most important problems in convex geometry deal with the properties and relations between the intrinsic volumes. These quantities admit several equivalent definitions in terms of various integral geometric formulas, most classically as the suitably normalized coefficient of the Steiner formula, which expresses the volume of tubular neighborhoods or radius $r$ as a polynomial in $r$ of degree bounded by the ambient space. In fact, this polynomial expansion holds for a much larger class of compact sets, and, in the case of smooth compact submanifolds, the resulting coefficients only depend on the induced metric and not the embedding of the submanifold into Euclidean space, as shown by Weyl \cite{WeylVolumeTubes1939}. In particular, these coefficients are indeed intrinsic invariants.\\
	Considered as functionals on the space $\mathcal{K}(\R^n)$ of convex bodies in $\R^n$, i.e. the set of all nonempty, compact, and convex subsets of $\R^n$ equipped with the Hausdorff metric, this universal behavior may be seen as a direct consequence of the following famous characterization result due to Hadwiger.
	\begin{theorem}[\cite{HadwigerVorlesungenuberInhalt1957}]
		\label{theorem:Hadwiger}
		Let $\mu:\mathcal{K}(\R^n)\rightarrow\R$ be a continuous, translation and $\SO(n)$-invariant valuation. Then $\mu$ is a linear combination of the intrinsic volumes $V_0,\dots,V_n$.
	\end{theorem}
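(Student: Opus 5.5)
We will follow Klain's approach to Hadwiger's theorem, which works by induction on the dimension $n$. The central tool is a \emph{volume characterization}: a continuous, translation invariant valuation $\mu$ on $\mathcal{K}(\R^n)$ that is \emph{simple} (it vanishes on all bodies of dimension less than $n$) and even, $\mu(-K)=\mu(K)$, is a constant multiple of the volume $V_n$. Assuming this, the induction runs as follows. For $n=1$, every convex body is a segment or a point, and a continuous, translation invariant valuation is $a+bV_1$, since additivity on segments forces linearity in the length. Now assume the theorem holds in dimension $n-1$, and let $\mu$ be as in the statement on $\R^n$. Restrict $\mu$ to a fixed hyperplane $H\cong\R^{n-1}$. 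The restriction is continuous, translation invariant and $\SO(n-1)$-invariant, so by induction it equals $\sum_{i=0}^{n-1}c_iV_i$. Because $\mu$ is $\SO(n)$-invariant, the constants $c_i$ do not depend on $H$. The intrinsic volumes are independent of the ambient dimension, so $\nu:=\mu-\sum_{i=0}^{n-1}c_iV_i$ vanishes on every body contained in a hyperplane, i.e. $\nu$ is simple. Finally, $\SO(n)$-invariance gives evenness: for $n\ge2$, every $K$ is congruent to $-K$ up to a reflection, and the reflection in turn reduces to rotation invariance. This needs a small argument, which is easiest on simplices or via Klain's trick of writing $\nu$ on a simplex-free decomposition. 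By the volume characterization, $\nu=c_nV_n$.

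The real work, and the step we expect to be the main obstacle, is the volume characterization. The plan is as follows.
\begin{enumerate}
\item By continuity, it suffices to prove the characterization on polytopes, or even on finite unions of boxes and prisms.
\item Induct again on $n$. Let $\nu$ be simple, even and translation invariant. After subtracting a multiple of $V_n$, we may assume $\nu$ vanishes on the unit cube. By simplicity and translation invariance, $\nu$ is then additive under cutting boxes into sub-boxes. With continuity, this forces $\nu=0$ on all boxes, and hence on all right cylinders $C\times[0,t]$: the map $t\mapsto\nu(C\times[0,t])$ is additive and continuous, hence linear in $t$, and its coefficient is a simple, even, continuous valuation on $\R^{n-1}$. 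By the inner induction that coefficient is a multiple of $\vol_{n-1}$, and that multiple is determined on the cube, where it is zero.
\item Pass from right cylinders to slanted cylinders $C\times_v[0,1]$ over a base $C$ in direction $v$. Here we use the standard cut-and-translate argument: a slanted prism can be cut and reassembled, up to translation and pieces of lower dimension, into a right prism plus a difference that is controlled.
\item Show $\nu=0$ on every simplex, and hence on all polytopes. Let $\Delta$ be a simplex. Klain's evenness trick gives $\nu(\Delta)=-\nu(-\Delta)$: cut a suitable prism $\Delta'\times[0,1]$-type body into pieces, noting that a centrally symmetric zonotope built from $\Delta$ decomposes into $\Delta$, $-\Delta$ (up to translation) and prisms. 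Combined with evenness, this gives $\nu(\Delta)=0$.
\end{enumerate}

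Continuity closes the argument on all of $\mathcal{K}(\R^n)$.

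A cleaner alternative to the last step is Schneider's result: a simple, continuous, translation invariant valuation has the form $cV_n+\int_{S^{n-1}}f\,dS_{n-1}(K,\cdot)$ with $f$ odd. Evenness then kills the integral term directly. We would use this alternative if Klain's decomposition turns out to be messy, since Klain's argument needs a careful explicit decomposition.
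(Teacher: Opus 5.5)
The paper does not prove this statement; it is quoted from Hadwiger as classical background, so your proposal has to stand on its own. Your outer induction is the standard one and is fine: restrict to a hyperplane, subtract $\sum_{i<n}c_iV_i$, and reduce to a simple $\nu$.

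\textbf{The gap: Step 4 of your volume characterization.} You claim that, using only simplicity, translation invariance and vanishing on prisms, a zonotope built from $\Delta$ can be cut into $\Delta$, a translate of $-\Delta$, and prisms, giving $\nu(\Delta)=-\nu(-\Delta)$. This works for $n=2$ but not for $n\ge 3$. Already for $\Delta=\mathrm{conv}(0,e_1,e_2,e_3)$, removing $\Delta$ and $(1,1,1)-\Delta$ from the unit cube leaves an octahedron.

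In fact no such decomposition can exist. Take the Dehn-type functional $\phi(P)=\sum_e \ell(e)f(\theta_e)$ on $\calP(\R^3)$, summed over edges $e$ with length $\ell(e)$ and dihedral angle $\theta_e$, where $f:\R\to\R$ is $\mathbb{Q}$-linear with $f(\pi)=0$ and $f(\arccos\frac13)\neq 0$.
\begin{itemize}
\item It is a simple valuation, invariant under all isometries, hence translation invariant and even.
\item It vanishes on every prism $K+[0,s]$: the dihedral angles at corresponding bottom and top edges add up to $\pi$, and the angles at the lateral edges (all of the same length) add up to a multiple of $\pi$. Hence it vanishes on zonotopes.
\item It does not vanish on the regular tetrahedron.
\end{itemize}
A decomposition as in Step 4 would therefore give $2\phi(\Delta)=0$, a contradiction. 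So vanishing on prisms together with evenness, or even with full isometry invariance, does not force $\nu(\Delta)=0$ by dissection arguments. Continuity has to be used again at exactly this point (this is the content of Hilbert's third problem), and your sketch gives no mechanism for it.

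Your fallback, the Klain--Schneider representation $cV_n+\int f\,dS_{n-1}(K,\cdot)$ with $f$ odd, does close the step. However, its proof in the literature obtains the even part from Klain's volume theorem, so this cites the heart of the argument rather than proving it.

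\textbf{Two smaller gaps.}
\begin{enumerate}
\item For odd $n$ we have $-I\notin\SO(n)$, so evenness of $\nu$ needs an actual argument that a simple $\SO(n)$-invariant valuation is reflection invariant. One way: cone a simplex from its incenter $c$ and split each facet $F_i$ from the point $p_i$ where the insphere touches it. For a ridge $G=F_i\cap F_j$, the pieces $\mathrm{conv}(c,p_i,G)$ and $\mathrm{conv}(c,p_j,G)$ are mirror images in the bisecting hyperplane through $G$ and $c$. Composing any reflection with these mirror reflections gives rotations, which yields reflection invariance.
\item In Steps 2--3, normalizing on the axis-parallel cube only kills right prisms over coordinate hyperplanes. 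The cut-and-translate argument for a slanted prism produces right prisms over the hyperplane orthogonal to the slant direction. You therefore need the constant $c_\xi$ for every hyperplane $\xi$, and a reason it does not depend on $\xi$; for instance, a parallelotope is a prism over each of its facet hyperplanes.
\end{enumerate}
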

	Here, a functional $\mu$ defined on some family of sets $\mathcal{S}$ is called a valuation if it satisfies
	\begin{align*}
		\mu(K)+\mu(L)=\mu(K\cup L)+\mu(K\cap L)
	\end{align*}
	for all $K,L\in\mathcal{S}$ such that $K\cup L,K\cap L\in\mathcal{S}$. This notion goes back to Dehn's solution of Hilbert's Third Problem, but has since become an important part of convex, integral, and differential geometry \cite{AleskerFaifmanConvexvaluationsinvariant2014,AleskerDescriptioncontinuousisometry1999,BernigEtAlHardLefschetztheorem2024,BoeroeczkyLudwigMinkowskivaluationslattice2019,FaifmanHofstaetterConvexvaluationsWhitney2025,FreyerEtAlUnimodularvaluationsEhrhart2025,LudwigEllipsoidsmatrixvalued2003,LudwigReitznerclassification$SLn$invariant2010,SchusterWannerer$GLn$contravariantMinkowski2012,SchusterWannererMinkowskivaluationsgeneralized2018,KnoerrSmoothvaluationsconvex2025,SchneiderSimplevaluationsconvex1996}. Moreover, there now exists a well developed theory of valuations on manifolds, \cite{AleskerTheoryvaluationsmanifolds.2006,BernigBroeckerValuationsmanifoldsRumin2007,BernigEtAlIntegralgeometrycomplex2014,BernigEtAlWeyltubetheorem2026}, which relies on a synthetic notion of smooth valuations that builds on the description of continuous and translation invariant valuations by Alesker \cite{AleskerDescriptiontranslationinvariant2001}.\\
	It has been a long standing open question in geometric valuation theory whether a characterization similar to \autoref{theorem:Hadwiger} holds for valuations on suitable families of sets in other space forms - more precisely, for convex bodies or polytopes on the sphere (see \cite{McMullenSchneiderValuationsconvexbodies1983}*{Problem~15.5}) or hyperbolic space. However, since all known proofs of Hadwiger's Theorem (see \cite{HadwigerVorlesungenuberInhalt1957,Chensimplifiedelementaryproof2004,KlainshortproofHadwigers1995,KnoerrRigidmotioninvariant2026}) make essential use (either explicitly or implicitly) of the special polynomial behavior of translation invariant valuations on polytopes with respect to Minkowski addition (see \cite{McMullenValuationsEulertype1977,PukhlikovKhovanskiiFinitelyadditivemeasures1992} for details), the arguments do not generalize easily. The only exceptions are the two dimensional cases, which were considered by Klain \cite{KlainIsometryinvariantvaluations2006} for the hyperbolic plane and Klain--Rota \cite{KlainRotaIntroductiongeometricprobability1997}*{Theorem 11.3.1} for the two dimensional sphere.\\
	In this article, we provide an affirmative solution to this problem for the spheres $\S^n$, $n\ge 1$. Let $\calP(\S^n)$ denote the space of spherical polytopes, that is, all subsets of $\S^n$ that are contained in an open hemisphere and that can be expressed as a finite intersection of closed hemispheres. We equip $\calP(\S^n)$ with the spherical Hausdorff metric. The main result of this article is the following characterization of the spherical intrinsic volumes, which we also denote by $V_0,\dots,V_n$.
	\begin{maintheorem}\label{maintheorem:HadwigerSphere}
		Let $\mu:\calP(\S^n)\rightarrow \R$ be a continuous and $\SO(n+1)$-invariant valuation. Then $\mu$ is a linear combination of the spherical intrinsic volumes $V_0,\dots,V_n$.
	\end{maintheorem}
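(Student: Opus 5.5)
We argue by induction on $n$, the case $n=1$ being elementary: spherical polytopes in $\S^1$ are points and arcs of length $<\pi$, and continuity together with the valuation property forces $\mu$ to be an affine function of arc length. For the inductive step, fix $\mu$ as in Theorem~\ref{maintheorem:HadwigerSphere}. Restrict $\mu$ to polytopes contained in a fixed great subsphere $\S^{n-1}\subset\S^n$. This restriction is continuous and invariant under the stabilizer $\SO(n)$, so by the induction hypothesis it coincides with a combination $\sum_{i\le n-1} c_iV_i$; we use here that the spherical intrinsic volumes are intrinsic, i.e. independent of the ambient sphere. Set $\nu=\mu-\sum_{i\le n-1}c_iV_i$. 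Then $\nu$ vanishes on all lower dimensional spherical polytopes, so $\nu$ is \emph{simple}. It therefore suffices to show that a continuous, simple, $\SO(n+1)$-invariant valuation on $\calP(\S^n)$ is a multiple of $V_n$, the spherical volume.

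To handle simple valuations we pass to $\R^n$ by gnomonic (central) projection from the center of an open hemisphere. Under this map spherical polytopes in that hemisphere correspond bijectively to Euclidean polytopes, and the map respects unions and intersections. Hence $\nu$ induces a continuous simple valuation $\tilde\nu$ on $\calP(\R^n)$. The stabilizer of the hemisphere acts on the projective chart by projective maps fixing the origin, and the full group $\SO(n+1)$ acts locally by projective transformations. The key is to exploit this projective action as a smoothness property. Averaging $\nu$ (or suitable pieces of it) against smooth compactly supported functions on a neighbourhood of the identity in $\SO(n+1)$ is harmless, because $\nu$ is invariant. On the Euclidean side, however, this makes $\tilde\nu$ \emph{smooth} with respect to a group containing a neighbourhood of the identity in the affine group. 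This is where the abstract's announced weak differentiability property enters: we would show that such affine-smooth valuations on polytopes can be differentiated along translations. Concretely, for $x$ near the origin the derivative of $\tilde\nu$ in the direction of infinitesimal translation, at the origin, defines a translation invariant valuation.

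The plan then is to blow up at a point. Since $\tilde\nu$ is simple and suitably smooth, its "local density" at a point should be a simple translation invariant valuation on $\calP(\R^n)$. Localizing near the origin, we consider dilates $\tilde\nu(tP)$ as $t\to0$; projectively rescaled rotations approach translations, and the limiting behaviour of $t^{-k}\tilde\nu(tP)$ produces translation invariant, simple, measurable valuations. By results for such valuations (Klain--Schneider type: a simple translation invariant valuation is $c\vol_n+$ an odd part of degree $n-1$), together with the rotation invariance inherited at the origin, which kills the odd part, the density is a multiple of Lebesgue measure. Patching via invariance shows $\nu=c\,V_n$ on small polytopes. Then additivity, by subdividing any spherical polytope into small ones, gives the identity everywhere.

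The main obstacle is the middle step: proving that $\SO(n+1)$-invariance, which is transitive but contains no translations, yields enough regularity of $\tilde\nu$ to extract a genuine translation invariant limit. One cannot simply appeal to continuity here, because the limit of rescalings need not exist a priori, and continuous valuations on polytopes lack the Alesker machinery available on convex bodies. I would first try to establish the weak differentiability via mollification over the group and Alesker-style Fréchet-type estimates adapted to polytopes. This yields at worst a measurable translation invariant valuation, for which the classification of measurable simple valuations suffices.
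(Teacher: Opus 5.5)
Your reduction to the simple case is exactly the paper's argument: induction on $n$, restriction to a great subsphere, subtracting $\sum_{i\le n-1}c_iV_i$, and using $\SO(n+1)$-invariance to see that $\nu$ is simple. Your direct treatment of $n=1$ via Cauchy's equation is also fine. The gap is in the simple case, \autoref{maintheorem:HadwigerSphereSimple}, and it starts with the regularization.

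Averaging an $\SO(n+1)$-invariant $\nu$ against a test function on $\SO(n+1)$ just returns $(\int\phi)\,\nu$, so it yields no regularity. More fundamentally, any smoothness obtainable from $\SO(n+1)$ is along $\SO(n+1)$-orbits, on which $\nu$ is constant anyway. Its action on a gnomonic chart contains no dilations, and dilations are exactly the direction in which you must differentiate $t\mapsto\tilde\nu(tP+x)$. So the claim that this makes $\tilde\nu$ smooth for a neighbourhood of the identity in $\Aff(n,\R)$ is false.

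The missing idea is to mollify over the larger group $\GL(n+1,\R)$, acting on $\S^n$ through cones by $g(v)=gv/|gv|$. By \autoref{lemma:EquivarianceAction}, this action contains the full affine group of every chart $u^\perp$. Since $\GL(n+1,\R)$ does not preserve $\nu$, one then re-averages over the compact group $\SO(n+1)$, which keeps $\GL(n+1,\R)$-smoothness and simplicity. This gives invariant, simple, smooth approximants that converge to $\nu$ only pointwise (\autoref{corollary:DensitySmoothInvariant}). One classifies these (\autoref{theorem:ClassificationSimpleSmooth}) and passes to the limit, using that they all lie in the line $\R V_n$. Once affine smoothness is available, differentiability at $t=0$ comes from the canonical simplex decomposition and inclusion--exclusion (\autoref{theorem:DifferentiabilityAffineSmooth}), not from Fr\'echet-type estimates.

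Even granting smoothness, your blow-up-and-patch step is not yet an argument. Rotation invariance only controls $D^k_0(G_{u*}\nu)$ at the centre of each chart. To define the next derivative as a translation invariant valuation, and eventually to conclude via $W_{n+1}=0$, you need $D^k_x(G_{u*}\nu)=0$ at \emph{every} point $x$ of a single chart. Comparing the two would require a chain rule for the fractional linear transition maps between gnomonic charts, and pointwise affine smoothness does not provide one. The paper circumvents this in \autoref{lemma:KeyLemmaSimpleInvariant}, combining uniform Taylor estimates on compact families of simplices (\autoref{corollary:ReformulationEstimateSimplex}) with the convergence of rescaled simplices under a change of chart (\autoref{lemma:LimitCoordinateChangeSimplex}).

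Similarly, subdividing into small polytopes yields $\nu=cV_n$ only if the error estimates are uniform in the base point. That uniformity is what Taylor's formula for the smooth map $(t,x)\mapsto\mu(tP+x)$ provides in \autoref{proposition:FiltrationTerminates}.

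Finally, no Klain--Schneider classification is available for merely measurable simple valuations on polytopes. What the paper uses instead, degree by degree for $k\le n-1$, is the vanishing result \autoref{proposition:simpleValuationsVanish} for measurable, simple, $\SO(n)$-invariant, $k$-homogeneous valuations.
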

	Note that \autoref{maintheorem:HadwigerSphere} directly implies the corresponding statement for continuous valuations on spherical convex bodies, since any such body may be approximated by spherical polytopes.\\

	As in all known proofs of \autoref{theorem:Hadwiger}, \autoref{maintheorem:HadwigerSphere} can be reduced to the following characterization of \emph{simple} valuations, where we call $\mu:\calP(\S^n)\rightarrow\R$ simple if $\mu$ vanishes on lower dimensional spherical polytopes.
	\begin{maintheorem}\label{maintheorem:HadwigerSphereSimple}
		Let $\mu:\calP(\S^n)\rightarrow \R$ be a continuous and $\SO(n+1)$-invariant valuation. If $\mu$ is simple, then $\mu$ is a multiple of the spherical Lebesgue measure.
	\end{maintheorem}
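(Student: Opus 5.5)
We follow the strategy indicated in the abstract: pass from the sphere to $\R^n$ through a gnomonic (central) projection, and reduce to a translation invariant problem on Euclidean polytopes.

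\textbf{Step 1: transfer to $\R^n$.} Fix the open hemisphere centred at the north pole $e_{n+1}$ and identify it with $\R^n$ via $x\mapsto (x,1)/|(x,1)|$. Spherical polytopes in this hemisphere correspond exactly to Euclidean polytopes in $\R^n$. Hence $\mu$ induces a continuous simple valuation $\tilde\mu$ on $\calP(\R^n)$.

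The stabilizer of the north pole, $\SO(n)$, acts linearly on $\R^n$. Rotations moving the north pole act by projective transformations, i.e. by elements of $\GL(n+1)$ acting on the projective chart. Consequently $\tilde\mu$ is invariant under a compact group $\SO(n+1)$ of projective maps, and this group is transitive on directions. Since the orbit map is real analytic, $\tilde\mu$ should be smooth with respect to this action.

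\textbf{Step 2: localize, differentiate, and reduce.} We want to show that $\tilde\mu$ has a density $f(x)\,dx$, i.e. $\tilde\mu(P)=\int_P f$ for a function $f$ continuous on $\R^n$.

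Simple continuous valuations on polytopes are not automatically measures; the classical counterexamples are odd valuations built from facet normals. We therefore use the weak differentiability property announced in the abstract. Differentiating along the infinitesimal action near a point should produce a translation invariant simple valuation as a ``tangent'' valuation, with the idea that $\tilde\mu$ looks locally translation invariant after blowing up near $x$. Concretely:
\begin{itemize}
	\item First show that $\tilde\mu(\lambda(P-x)+x)/\lambda^n$ has a limit as $\lambda\to0$, using smoothness along the dilations that arise from the projective action.
	\item The limit is a translation invariant, simple, measurable valuation $\nu_x$ on $\calP(\R^n)$.
	\item By the known classification of such valuations (Klain--Schneider type, extended to the measurable case), $\nu_x=c(x)\vol+\text{odd part}$. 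The odd part is determined by a function on the sphere of normals modulo linear functions.
	\item Invariance under the isotropy group at $x$ forces the odd part to vanish. At the north pole this group acts linearly as $\SO(n)$ on normals, and an $\SO(n)$-invariant odd function on $\S^{n-1}$ is zero.
\end{itemize}
Transitivity of $\SO(n+1)$ then transports this conclusion to every point. Integrating back, via a Riemann-sum argument on fine subdivisions justified by the differentiability, gives $\mu(P)=\int_P c\, d\vol_{\S^n}$ with $c$ constant by transitivity.

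\textbf{Main obstacle.} The hard step is Step 2: showing that $\tilde\mu$ is genuinely locally approximated by its tangent valuations uniformly enough to recover $\mu$ from them. Mere continuity plus invariance gives no regularity a priori. I would first average $\mu$ against smooth bump functions on $\SO(n+1)$ near the identity. This does not change $\mu$, since $\mu$ is invariant, but it exhibits $\mu$ as an $\SO(n+1)$-smooth vector in the Banach space of continuous valuations. Alesker-style arguments for quasi-smooth vectors then give the required differentiability in the polytopal setting.
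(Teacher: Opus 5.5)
\textbf{First gap: where the regularity comes from.} Your regularisation step does nothing. Averaging an $\SO(n+1)$-invariant valuation against bump functions on $\SO(n+1)$ returns $\mu$ itself. Moreover, $\mu$ is trivially an $\SO(n+1)$-smooth vector, because its orbit map is constant. Likewise, in Step 1, real analyticity of the $\SO(n+1)$-orbit maps on $\S^n$ says nothing about $\mu$. None of this gives any information about the dilations $P\mapsto\lambda(P-x)+x$ in a gnomonic chart. These are not induced by rotations but by the maps $g^{u,x}_\lambda\in\GL(n+1,\R)$.

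The missing idea is to regularise with respect to $\GL(n+1,\R)$: convolve $\mu$ with an approximate identity on $\GL(n+1,\R)$, then average over $\SO(n+1)$ to restore invariance and keep simplicity (\autoref{corollary:DensitySmoothInvariant}). Since this genuinely changes $\mu$, you must prove the statement for the $\GL(n+1,\R)$-smooth approximants, which turn out to be $\mu_j=c_jV_n$, and then pass to the pointwise limit.

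Even then, differentiability of $\lambda\mapsto\mu(\lambda P+x)$ at $\lambda=0$ is not automatic, because $\lambda=0$ lies outside the group. The paper derives it, only pointwise, from the Canonical Simplex Decomposition and inclusion--exclusion by induction on dimension (\autoref{theorem:DifferentiabilityAffineSmooth}). So Alesker's uniform quasi-smooth machinery is not available and has to be rebuilt from this much weaker property.

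\textbf{Second gap: Step 2 misplaces the difficulty.} Given smoothness in $\lambda$, the limit $\lim_{\lambda\to0}\lambda^{-n}\tilde\mu(\lambda(P-x)+x)$ exists exactly when all tangent valuations $D^k_x$ with $k<n$ vanish at every $x$. That is the whole content of the proof, not a preliminary.

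Once that limit exists, it is $n$-homogeneous and hence a multiple of volume. So no odd part of degree $n-1$ can appear in it. You also never treat the degrees $k\le n-2$, and you would need a Klain--Schneider-type classification for merely measurable valuations on polytopes, which you do not justify.

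The paper instead argues by induction on $k\le n-1$. At the centre of a chart, $D^k_0(G_{u*}\mu)$ is measurable, simple, $k$-homogeneous and $\SO(n)$-invariant, so it vanishes by \autoref{proposition:simpleValuationsVanish}.

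Your phrase ``transitivity of $\SO(n+1)$ transports this to every point'' then hides the second real difficulty. For $x\neq0$, the isotropy group of $G_u^{-1}(x)$ acts on the $u$-chart by fractional linear, non-affine maps. For these weakly differentiable valuations there is no chain rule relating $D^k_x(G_{u*}\mu)$ to $D^k_0(G_{v*}\mu)$ with $v=G_u^{-1}(x)$. The paper bridges this in \autoref{lemma:KeyLemmaSimpleInvariant}. It uses uniform Taylor estimates on compact families of simplices with a vertex at the origin, together with the convergence of rescaled simplices under the change of charts (\autoref{lemma:LimitCoordinateChangeSimplex}). Only once $G_{u*}\mu\in W_n(u^\perp)$ is established for every $u$ does your Riemann-sum step apply.
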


	Let us discuss the strategy behind our proof. Although the focus on polytopes suggests a more discrete approach, the main idea is rooted in Alesker's notion of \emph{quasi-smooth} valuations on convex bodies \cite{AleskerTheoryvaluationsmanifolds.2006}. Informally speaking, these are valuations $\mu:\K(\R^n)\rightarrow\R$ such that the maps
	\begin{align}
		\label{eq:ScalingMapConvexBody}
		t\mapsto \mu(tK+x), \quad K\in\mathcal{K}(\R^n),x\in\R^n,
	\end{align}
	satisfy a rather strong uniform differentiability condition in $t=0$. In particular, this property is not satisfied by a general continuous valuation and it is not known whether every continuous valuation  on $\K(\R^n)$ can be approximated in a suitable sense by valuations of this type. The motivation to consider this class of valuations comes from the observation that their derivatives in $t=0$ can be considered as translation invariant valuations on the tangent space at the point $x\in\R^n$ (under suitable vanishing properties of lower order derivatives, see \autoref{section:differentiabilityProperties} below), so these valuations can be studied using results from the translation invariant case.\\
	
	The first ingredient in our proof of \autoref{maintheorem:HadwigerSphere} is the observation that a weak version of this differentiability property holds automatically for valuations on the space $\calP(\R ^n)$ of polytopes in $\R^n$ that are \emph{affine smooth} in the follow sense: If $\Aff(n,\R)$ denotes the affine group, then for every $P\in\calP(\R ^n)$, 
	\begin{align*}
		A\mapsto \mu(A(P)), \quad A\in\Aff(n,\R),
	\end{align*}
	defines a smooth function on $\Aff(n,\R)$. Under these assumptions, the corresponding map in Eq.~\eqref{eq:ScalingMapConvexBody} is pointwise (i.e. for every fixed $P\in \calP(\R^n)$) differentiable in $t=0$, compare \autoref{theorem:DifferentiabilityAffineSmooth}. In essence, this remarkable property is a consequence of the Canonical Simplex Decomposition (see \autoref{section:polytopes}) and the Inclusion-Exclusion Principle, which lets us replace the derivative in $t=0$ by suitable derivatives of the action of $\Aff(n,\R)$ on different polytopes. This enables us to transfer some of Alesker's constructions from \cite{AleskerTheoryvaluationsmanifolds.2006} to affine smooth valuations on polytopes in $\R^n$.\\
	
	In order to apply this to valuations on $\S^n$, it is useful to identify $P\in\calP(\S^n)$ with the polyhedral cone
	\begin{align*}
		\hat{P}:=\{tu\in \R^n: u\in P, t\ge 0\}.
	\end{align*}
	Note that $P=\hat{P}\cap \S^n$, so there is a $1$-to-$1$-correspondence between polyhedral cones contained (except for the origin) in an open half space and spherical polytopes. In this interpretation, there is an obvious action of the general linear group $\GL(n+1,\R)$ on $\S^n$ and $\calP(\S^n)$ corresponding to the action of $\GL(n+1,\R)$ on the ambient linear space, compare \autoref{section:sphericalPolytopes}, and a standard convolution argument reduces the proof of \autoref{maintheorem:HadwigerSphereSimple} to a corresponding result for $\GL(n+1,\R)$-smooth valuations in the sense of \autoref{section:glSmooth}. Moreover, this action satisfies some strong compatibility properties with the gnomonic projection, which implies that, informally, any $\GL(n+1,\R)$-smooth valuation is affine smooth in coordinate charts corresponding to the gnomonic projection. \autoref{maintheorem:HadwigerSphereSimple} may then be reduced to showing that a simple and $\SO(n+1)$-invariant valuation on $\S^n$ satisfying these smoothness properties restricts to smooth measures in coordinate charts. This last property can equivalently be encoded in the vanishing of the derivatives of Eq.~\eqref{eq:ScalingMapConvexBody} in $t=0$ up to order $n-1$, compare \autoref{section:differentiabilityProperties}.\\
	
	This is where the last ingredient for the proof enters. It was shown in \cite{KnoerrRigidmotioninvariant2026} that Hadwiger's characterization in \autoref{theorem:Hadwiger} holds under weaker regularity requirements for translation invariant valuations on polytopes: It already holds for rigid motion invariant \emph{measurable} valuations. In particular, this result can in principle be applied to the valuations obtained by differentiating  Eq.~\eqref{eq:ScalingMapConvexBody}, which we use to establish \autoref{maintheorem:HadwigerSphereSimple} for $\GL(n+1)$-smooth valuations. The main challenge is a lack of a proper chain rule that lets us compare the derivatives corresponding to Eq.~\eqref{eq:ScalingMapConvexBody} in different coordinate charts. However, since we are only interested in showing that the relevant derivatives vanish, this can be circumvented by using suitable estimates derived from Taylor's formula with remainder.
	
	\begin{acknowledgement}
		This research was funded in whole or in part by the Austrian Science Fund (FWF), \href{https://www.doi.org/10.55776/PAT4205224}{10.55776/PAT4205224}. 
	\end{acknowledgement} 
\section{Preliminaries}

	\subsection{Polytopes and convex bodies in $\R^n$}
		\label{section:polytopes}
		We refer to \cite{SchneiderConvexbodiesBrunn2014} for a general background on convex bodies.\\	Recall that $\K(\R^n)$ denotes the space of convex bodies equipped with the Hausdorff metric. We denote the subspace of polytopes in $\R^n$ by $\calP(\R^n)\subset \K(\R ^n)$ and equip it with the induced metric. In particular, we call a map $\mu:\calP(\R ^n)\rightarrow\R$ measurable if the preimage of any open subset of $\R^n$ is a Borel set of $\calP(\R ^n)$. For an affine subspace $E\subset\R^n$, we will also write $\calP(E)\subset\calP(\R^n)$ for the subset of polytopes contained in $E$.\\
		We require the following description of convergence in $\mathcal{K}(\R ^n)$.
		\begin{theorem}[\cite{SchneiderConvexbodiesBrunn2014}*{Theorem 1.8.8}]
			\label{theorem:ConvergenceHausdorff}
			Let $(K_j)_j$ be a sequence in $\mathcal{K}(\R^n)$ and $K\in\mathcal{K}(\R ^n)$. Then $(K_j)_j$ converges to $K$ if and only if
			\begin{enumerate}
				\item each point in $K$ is the limit of a sequence $(x_j)_j$ with $x_j\in K_j$;
				\item the limit of every convergent sequence $(x_{i_j})_j$ with $x_{i_j}\in K_{i_j}$ belongs to $K$.
			\end{enumerate}
		\end{theorem}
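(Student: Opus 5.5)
The plan is to pass through the support function description of the Hausdorff metric. For $K\in\K(\R^n)$ let $h_K(u)=\max_{x\in K}\langle x,u\rangle$. Then $d_H(K,L)=\|h_K-h_L\|_{\infty}$ on the unit sphere, and $d_H(K,L)\le\varepsilon$ holds exactly when $K\subset L+\varepsilon B^n$ and $L\subset K+\varepsilon B^n$. I would prove the two directions separately, using only this neighbourhood characterization together with compactness.

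\emph{Necessity.} Suppose $K_j\to K$.
\begin{enumerate}
	\item Given $x\in K$, the inclusion $K\subset K_j+d_H(K_j,K)B^n$ lets us pick $x_j\in K_j$ with $|x_j-x|\le d_H(K_j,K)$. Hence $x_j\to x$.
	\item If $x_{i_j}\in K_{i_j}$ and $x_{i_j}\to x$, then $\operatorname{dist}(x_{i_j},K)\le d_H(K_{i_j},K)\to0$. Since $K$ is closed, this gives $x\in K$.
\end{enumerate}

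\emph{Sufficiency.} Assume (1) and (2). First I would show that the $K_j$ are uniformly bounded. Fix $x\in K$ and choose $x_j\in K_j$ with $x_j\to x$ by (1). Suppose a subsequence had points $y_{i_j}\in K_{i_j}$ with $|y_{i_j}|\to\infty$. By convexity, the segment $[x_{i_j},y_{i_j}]$ lies in $K_{i_j}$, so we may pick points $z_{i_j}$ on it with $|z_{i_j}-x|=R$, where $R>\diam K+1$. A further subsequence of the $z_{i_j}$ converges, and by (2) its limit lies in $K$. But that limit is at distance $R$ from $x\in K$, contradicting $R>\diam K$. So all $K_j$ lie in a fixed ball.

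Now suppose $d_H(K_j,K)\not\to0$. Passing to a subsequence, $d_H(K_j,K)\ge\varepsilon$ for some $\varepsilon>0$. For each such $j$, one of two things happens.
\begin{itemize}
	\item[(a)] There is $y_j\in K_j$ with $\operatorname{dist}(y_j,K)\ge\varepsilon$. By boundedness a subsequence of the $y_j$ converges, and its limit lies outside the open set $\{\operatorname{dist}(\cdot,K)<\varepsilon\}$. This contradicts (2).
	\item[(b)] There is $x_j\in K$ with $\operatorname{dist}(x_j,K_j)\ge\varepsilon$. By compactness of $K$, a subsequence has $x_j\to x\in K$. By (1) there are $w_j\in K_j$ with $w_j\to x$, along the full sequence and hence along the subsequence. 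Then $\operatorname{dist}(x_j,K_j)\le|x_j-w_j|\to0$, a contradiction.
\end{itemize}
One of (a), (b) occurs infinitely often, which suffices.

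The main obstacle is the uniform boundedness step. Without it, condition (2) cannot exclude points of $K_j$ escaping to infinity. This is exactly where convexity enters: it lets us replace far-away points by points at a controlled distance $R$ from $K$.
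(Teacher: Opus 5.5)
The paper gives no proof of this statement; it simply quotes Schneider's Theorem 1.8.8. Your argument is correct and is essentially the standard proof of that result: necessity comes from the $\varepsilon$-neighbourhood description of $d_H$, and sufficiency from a two-case compactness argument in which convexity prevents points of $K_j$ from escaping to infinity. The differences are only cosmetic. You isolate uniform boundedness as a separate step, whereas one can just as well apply the convexity trick directly in your case (a) to move the offending points onto the set of points at distance exactly $\varepsilon$ from $K$. Also, the support-function formula you announce at the start is never actually used.
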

		
		We note the following consequence for sequences of simplices.
		\begin{corollary}\label{corollary:ConvergenceSimplices}
			Let $(\Delta_j)_j$ be a bounded sequence of full dimensional simplices in $\R^n$ and assume that $A\subset \R^n$ is a compact subset with nonempty interior that satisfies the two condition in \autoref{theorem:ConvergenceHausdorff} for the sequence $(\Delta_j)_j$. Then $A$ is a full dimensional simplex and $(\Delta_j)_j$ converges to $A$ in the Hausdorff metric. 
		\end{corollary}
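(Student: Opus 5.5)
Since $(\Delta_j)_j$ is bounded, I will write $\Delta_j=\mathrm{conv}\{v^j_0,\dots,v^j_n\}$ and pass to a subsequence along which each vertex sequence converges, $v^j_i\to v_i$ for $i=0,\dots,n$. I then set $S:=\mathrm{conv}\{v_0,\dots,v_n\}$. Since convex hulls depend continuously on finitely many points, $\Delta_j\to S$ in the Hausdorff metric along this subsequence.

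The next step is to show $A=S$. I plan to use \autoref{theorem:ConvergenceHausdorff} in both directions.
\begin{itemize}
\item Take $a\in A$. By condition (1) there are $x_j\in\Delta_j$ with $x_j\to a$. Along the subsequence, condition (2) for the convergent sequence $\Delta_j\to S$ gives $a\in S$.
\item Take $s\in S$. By condition (1) applied to $\Delta_j\to S$, there are points $x_j\in\Delta_j$ along the subsequence converging to $s$. Condition (2), assumed for $A$ and valid for subsequences, then gives $s\in A$.
\end{itemize}
Hence $A=S$ is the convex hull of $n+1$ points. Because $A$ has nonempty interior, these points are affinely independent, so $A$ is a full dimensional simplex.

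For convergence of the whole sequence I argue by contradiction. If $\Delta_j\not\to A$, some subsequence stays at Hausdorff distance at least $\varepsilon>0$ from $A$. This subsequence is still bounded, and $A$ still satisfies both conditions for it, since condition (1) passes to subsequences trivially and condition (2) does as well. Repeating the argument above yields a further subsequence converging to $A$, which is a contradiction.

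The one delicate point is that condition (1) is only assumed for the full sequence. This causes no difficulty, because a sequence $x_j\to a$ with $x_j\in\Delta_j$ restricts to any subsequence.
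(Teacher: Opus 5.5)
Your proposal is correct and follows essentially the same route as the paper: extract a convergent subsequence, use the two conditions of \autoref{theorem:ConvergenceHausdorff} to show its limit equals $A$, conclude convergence of the whole sequence because $A$ is the only possible limit point, and use the nonempty interior of $A$ to see that it is a simplex. The only difference is cosmetic. You get the subsequential limit by letting the vertices converge, where the paper invokes Blaschke selection together with closedness of the set of polytopes with at most $n+1$ vertices. You also write out the subsequence-of-a-subsequence argument that the paper leaves implicit in ``unique limit point''.
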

		\begin{proof}
			By the Blaschke Selection Theorem (see \cite{SchneiderConvexbodiesBrunn2014}*{Theorem 1.8.7}), the sequence $(\Delta_j)_j$ has a convergent subsequence, say $(\Delta_{i_j})_{j}$. Since the set of all polytopes with at most $n+1$ vertices is closed in the Hausdorff metric, the limit $P$ is a polytope with at most $n+1$ vertices. Now note that both $P$ and $A$ satisfy the conditions in \autoref{theorem:ConvergenceHausdorff} for the sequence $(\Delta_{i_j})_{j}$, so we have $P\subset A$ and $A\subset P$. Thus we see that $P=A$, so $A$ is the unique limit point of $(\Delta_j)_j$, which shows that the sequence converges to $A$. Since $A$ has nonempty interior, $P=A$ is a full dimensional polytope with $(n+1)$-vertices and therefore a simplex.
		\end{proof}

		If $P\in \calP(\R^n)$ is a finite union of polytopes $P_j$ such that $P_i\cap P_j$ is lower dimensional for all $1\le i,j\le N$, we will write $P=\bigsqcup_{j=1}^N P_j$. Let $\Delta_j=\{x\in \R^j: 0\le x_1\le\dots\le x_j\le 1\}$ denote the $j$-dimensional standard simplex. The following is known as the Canonical Simplex Decomposition, compare \cite{HadwigerVorlesungenuberInhalt1957}*{Section 1.2.6}. We refer to \cite{AleskerIntroductiontheoryvaluations2018}*{Theorem~1.1.} and \cite{LudwigMussnigValuationsConvexBodies2023}*{Theorem~4.2} for proofs. 
		\begin{theorem}
			\label{theorem:SimplexDeomposition}
			For $s,t\ge0$,
			\begin{align*}
				(s+t)\Delta_n=\bigsqcup_{j=0}^n \left((s\Delta_j)\times(t\Delta_{n-j}+\underbrace{(s,\dots,s)}_{(n-j)-\text{times}})\right).
			\end{align*}
		\end{theorem}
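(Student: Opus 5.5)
We need to prove the Canonical Simplex Decomposition: $(s+t)\Delta_n$ equals the union over $j$ of $(s\Delta_j)\times(t\Delta_{n-j}+(s,\dots,s))$, and these pieces overlap only in lower dimensional sets. The argument is a direct coordinate computation. Throughout, points of $\R^n$ are written $x=(y,z)$ with $y\in\R^j$ and $z\in\R^{n-j}$.

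\textbf{The pieces lie in $(s+t)\Delta_n$.} By definition the $j$-th piece is
\begin{align*}
Q_j:=\{(y,z): 0\le y_1\le\dots\le y_j\le s,\ s\le z_1\le\dots\le z_{n-j}\le s+t\}.
\end{align*}
For $(y,z)\in Q_j$ the full coordinate chain $0\le y_1\le\dots\le y_j\le s\le z_1\le\dots\le z_{n-j}\le s+t$ holds. In particular $x$ is nondecreasing with entries in $[0,s+t]$, so $Q_j\subset(s+t)\Delta_n$.

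\textbf{The pieces cover $(s+t)\Delta_n$.} Let $x\in(s+t)\Delta_n$, so $0\le x_1\le\dots\le x_n\le s+t$. Set $j:=\#\{i: x_i\le s\}$. Since $x$ is nondecreasing, exactly $x_1,\dots,x_j$ are $\le s$ and $x_{j+1},\dots,x_n$ are $>s$. The first block then lies in $s\Delta_j$. For the second block, the entries $x_{j+1}-s,\dots,x_n-s$ are nondecreasing and lie in $[0,t]$, so that block lies in $t\Delta_{n-j}+(s,\dots,s)$. Hence $x\in Q_j$, and the union is all of $(s+t)\Delta_n$.

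\textbf{Overlaps are lower dimensional.} Take $i<j$ and $x\in Q_i\cap Q_j$. Membership in $Q_i$ gives $x_{i+1}\ge s$, while membership in $Q_j$ gives $x_{i+1}\le x_j\le s$. So $x_{i+1}=s$, and $Q_i\cap Q_j$ lies in the hyperplane $\{x_{i+1}=s\}$, which is lower dimensional. This applies to every distinct pair of indices in the theorem.

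\textbf{Degenerate cases.} If $s=0$ or $t=0$, some pieces are themselves lower dimensional. For example, with $s=0$ every piece with $j\ge1$ is contained in $\{x_1=0\}$. The covering and intersection arguments above are unaffected, so the notation $\bigsqcup$ is still justified.

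No step is a real obstacle. The only point needing care is the tie case $x_{j+1}=s$ in the covering argument; choosing $j$ by the non-strict inequality $x_i\le s$ resolves it, and such boundary points are in any case harmless for the decomposition.
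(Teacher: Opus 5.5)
Your proof is correct and uses the same coordinate argument the paper sketches after the statement. The paper defers the actual proof to the cited references, but it characterizes when $x\in(s+t)\Delta_n$ lies in the $i$-th piece via indices $j\le i\le k$ recording where $s$ sits in the chain $x_1\le\dots\le x_n$. Your choice $j=\#\{i:x_i\le s\}$ is exactly the paper's $k$, and your observation that $x_{i+1}=s$ on overlaps is the content of the intersection formula the paper records right after the theorem. The one difference is in the degenerate cases: you handle $s=0$ or $t=0$ directly, whereas the paper first treats $s,t>0$ and passes to the degenerate cases by continuity. Your direct treatment is valid and slightly cleaner.
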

		Note that the result is usually stated for $s,t>0$ since the polytopes on the right hand side may be lower dimensional otherwise and the decomposition becomes trivial.\\
		Assume for now that $s,t>0$. Note that for $x\in (s+t)\Delta_n$ there exists a unique $0\le k\le n$ such that
		\begin{align*}
			0\le x_1\le \dots\le x_k\le s<x_{k+1}\le \dots\le x_n\le s+t.
		\end{align*}
		Similarly, there exists a unique $0\le j\le n$ such that
		\begin{align*}
			0\le x_1\le \dots\le x_j< s\le x_{j+1}\le \dots\le x_n\le s+t.
		\end{align*}
		Note that this implies $j\le k$. 
		Then $x\in(s+t)\Delta$ belongs to $s\Delta_i\times[t\Delta_{n-i}+(s,\dots,s)]$ if and only if $j\le i$ and $i\le k$. In particular, for $0\le a<b\le n$, 
		\begin{align*}
			&\left(s\Delta_a\times[t\Delta_{n-a}+(s,\dots,s)]\right)\cap \left(s\Delta_b\times[t\Delta_{n-b}+(s,\dots,s)]\right)\\
			=&(s\Delta_a)\times \{\underbrace{(s,\dots,s)}_{(b-a)-\text{times}}\}\times[t\Delta_{n-b}+(s,\dots,s)].
		\end{align*}
		By continuity, the same relation holds for $s,t\ge 0$ (compare \cite{SchneiderConvexbodiesBrunn2014}*{Theorem 1.8.10}). In particular, we obtain the following.
		\begin{corollary}\label{corollary:SimplexDecompositionIntersections}
			For $s,t\ge 0$, let $P_j(s,t)=s\Delta_j\times[t\Delta_{n-j}+\underbrace{(s,\dots,s)}_{(n-j)-\text{times}}]$. For $\emptyset\ne I\subset\{0,\dots,n\}$, we have
			\begin{align*}
				\bigcap_{j\in I}P_j(s,t)=(s\Delta_{\min I})\times \underbrace{\{(s,\dots,s)\}}_{(\max I-\min I)-\text{times}}\times[t\Delta_{n-\max(I)}+(s,\dots,s)].
			\end{align*}
		\end{corollary}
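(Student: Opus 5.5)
The plan is to work pointwise with the coordinate description of the pieces. For $s,t>0$ I would first write each piece as
\begin{align*}
P_i(s,t)=\{x\in\R^n: 0\le x_1\le\dots\le x_i\le s\le x_{i+1}\le\dots\le x_n\le s+t\},
\end{align*}
with the obvious conventions when $i=0$ or $i=n$. In this form the inequality $x_i\le s\le x_{i+1}$ is the only place where the index $i$ enters.

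Now fix $x\in(s+t)\Delta_n$, and let $j\le k$ be the indices introduced in the discussion preceding the corollary. Then $x\in P_i(s,t)$ exactly when $j\le i\le k$. Hence $x\in\bigcap_{i\in I}P_i(s,t)$ if and only if $j\le\min I$ and $\max I\le k$. I would translate these two conditions back into coordinates as follows.
\begin{itemize}
\item The condition $j\le \min I$ means $x_{\min I+1}\ge s$, and it also gives $x_1,\dots,x_{\min I}\le s$.
\item The condition $\max I\le k$ means $x_{\max I}\le s$, and it also gives $x_{\max I+1}\ge s$.
\end{itemize}
Combined with monotonicity, these force $x_{\min I+1}=\dots=x_{\max I}=s$. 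The first $\min I$ coordinates then lie in $s\Delta_{\min I}$, and the last $n-\max I$ coordinates lie in $t\Delta_{n-\max I}+(s,\dots,s)$. The converse inclusion is read off directly from the same inequalities. This gives the formula for $s,t>0$; it is exactly the two-set computation already done, iterated via $\min$ and $\max$.

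For $s=0$ or $t=0$ I would argue by continuity, as the paper does for the pairwise case.
\begin{itemize}
\item Both sides of the claimed identity are continuous in $(s,t)$: the right side is a product of scaled and translated simplices, and each $P_i(s,t)$ is continuous.
\item Intersections are not continuous in general, so continuity of the left side needs an argument. The simplest route is to avoid limits entirely and note that the coordinate description above still holds literally for $s,t\ge0$. One only has to check that $P_i(s,t)$ is given by the same inequalities when $s=0$ or $t=0$, which is immediate from the definition of $s\Delta_i$ and $t\Delta_{n-i}$ as sets of ordered coordinates in $[0,s]$ and $[0,t]$.
\item Rather than using the indices $j,k$, which were defined assuming $s,t>0$, the plan is to run the inequality computation directly. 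Intersecting the chains $x_1\le\dots\le x_i\le s\le x_{i+1}\le\dots\le x_n$ over $i\in I$ gives $x_{\max I}\le s\le x_{\min I+1}$, which together with monotonicity forces equality on the middle block. This argument works uniformly for all $s,t\ge0$.
\end{itemize}

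The main point of care is this degenerate case together with the edge indices $\min I=0$ or $\max I=n$, where some factors are empty products. I expect no real obstacle there; it is only bookkeeping.
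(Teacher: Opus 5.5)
Your proposal is correct and follows the paper's argument: membership of $x$ in $P_i(s,t)$ is characterized by $j\le i\le k$, and the intersection over $I$ is then read off from $\min I$ and $\max I$. Where you differ is in the degenerate case $s=0$ or $t=0$. There you observe that $P_i(s,t)=\{x: 0\le x_1\le\dots\le x_i\le s\le x_{i+1}\le\dots\le x_n\le s+t\}$ holds verbatim for all $s,t\ge 0$ and intersect these chains directly. This is cleaner than the paper's one-line appeal to continuity, which, as you point out, would itself need a justification that the intersections vary continuously.
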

	
	\subsection{Valuations on $\calP(\R^n)$}
		As a general background on valuations on polytopes and convex bodies in $\R^n$, we refer to \cite{SchneiderConvexbodiesBrunn2014}*{Section~6} and \cite{KlainRotaIntroductiongeometricprobability1997}. We will only require the following three results.\\
		First, valuations on polytopes satisfy the following more general additivity property, compare
		\cite{SchneiderConvexbodiesBrunn2014}*{Theorem~6.2.3}.
		\begin{theorem}\label{theorem:InclusionExclusionPrinciple}
			Every valuation $\mu:\calP(\R^n)\rightarrow \R$ satisfies the Inclusion-Exclusion Principle: If $P_i\in \calP(\R^n)$, $1\le i\le N$, are polytopes such that $\bigcup_{i=1}^N P_i\in \calP(\R^n)$, then
			\begin{align*}
				\mu\left(\bigcup_{i=1}^NP_i\right)=\sum_{r=1}^N(-1)^{r-1}\sum_{1< i_1\le \dots< i_r\le N}\mu\left(\bigcap_{j=1}^rP_{i_j}\right).
			\end{align*}
			Here, we set $\mu(\emptyset):=0$.
		\end{theorem}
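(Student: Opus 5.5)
The Inclusion-Exclusion Principle is cited from Schneider. The natural route is induction on the number $N$ of polytopes, using only the defining valuation property together with the fact that the relevant unions and intersections stay in $\calP(\R^n)$. The basic difficulty is that intermediate unions $P_1\cup\dots\cup P_{N-1}$ need not be convex, so the naive induction does not apply directly. I would therefore follow the standard approach: first extend $\mu$ to an additive functional on the lattice of finite unions of polytopes (polyconvex sets), and then read off the formula.

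Concretely, I would first prove the case where all sets are convex and every union in question is convex, by induction on $N$. Set $Q=\bigcup_{i=1}^N P_i$ and $Q_i=P_i\cap P_N$. The case $N=2$ is exactly the valuation property. For the inductive step I want $\mu(Q)=\mu(P_1\cup\dots\cup P_{N-1})+\mu(P_N)-\mu(\bigcup_{i<N}Q_i)$, which fails in general because of nonconvexity. So instead I would use Groemer's approach. Fix the finitely many hyperplanes supporting facets of all the $P_i$. These hyperplanes cut $\R^n$ into relatively open cells, and each $P_i$ and each intersection $\bigcap_j P_{i_j}$ is a disjoint union of such cells. Define $\hat\mu$ on indicator functions of cells: express a cell's closure via an inclusion-exclusion over its faces, i.e.\ invert the relation $\mu(\bar C)=\sum_{F\text{ face of }\bar C}\hat\mu(\mathrm{relint} F)$ by Möbius inversion on the face lattice.

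The key step is then to show that $\sum_C a_C\,\hat\mu(C)$ depends only on the function $\sum_C a_C 1_C$. Equivalently, I need that $\mu$ extends to a linear functional on the span of indicator functions of polytopes. This reduces to proving $\mu(P)=\mu(P\cap H^+)+\mu(P\cap H^-)-\mu(P\cap H)$ for a hyperplane $H$, which is the valuation property, since all three pieces are convex. Using this, I would show by induction on the number of cutting hyperplanes that $\mu(P)$ equals the sum of $\hat\mu$ over the cells in $P$.

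Once $\mu$ is additive on indicator functions, the claimed formula follows from the pointwise identity
\[
1_{\bigcup P_i}=\sum_{r=1}^N(-1)^{r-1}\sum_{i_1<\dots<i_r}1_{\bigcap_j P_{i_j}},
\]
which holds because $\prod_i(1-1_{P_i})$ expands into exactly these terms. Empty intersections contribute $0$, consistent with the convention $\mu(\emptyset)=0$.

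I expect the main obstacle to be the well-definedness of the extension, i.e.\ Groemer's extension theorem. This is where one has to check that refining the hyperplane arrangement does not change the value of $\sum_C\hat\mu(C)$. I would handle it by adding one hyperplane at a time and applying the valuation property to each cell that the new hyperplane splits.
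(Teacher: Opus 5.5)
The paper gives no proof of this statement; it is quoted from Schneider's monograph (Theorem~6.2.3). Your plan is a correct, self-contained proof along the classical Groemer-type lines: a hyperplane arrangement, the valuation property applied to hyperplane cuts, and bookkeeping over cells. What it adds is an actual argument in place of the reference.

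It can be shortened. The step you single out as the main obstacle, invariance of $\sum_C\hat\mu(C)$ under refinement of the arrangement (i.e.\ the full extension theorem), is not needed for inclusion--exclusion. Fix one arrangement $\mathcal{A}$ and use only the cells contained in $P=\bigcup_i P_i$. Distinct cells are disjoint and nonempty, so their indicator functions are linearly independent, and the whole content is the identity
\[
\mu(Q)=\sum_{C\subset Q}\hat\mu(C)\quad\text{for every polytope } Q\subset P \text{ that is a union of cells.}
\]
This is exactly your induction on the number $c(Q)$ of hyperplanes $H\in\mathcal{A}$ with $H\cap\mathrm{relint}\,Q\neq\emptyset$ and $\mathrm{aff}\,Q\not\subset H$.
\begin{itemize}
\item If $c(Q)=0$, then $\mathrm{relint}\,Q$ lies in a single cell and $Q$ is its closure. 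The identity is then your M\"obius relation, since the cells inside a closed cell are exactly the relative interiors of its faces.
\item Otherwise, cutting by such an $H$ produces $Q\cap H^{+}$, $Q\cap H^{-}$ and $Q\cap H$, each with strictly smaller $c$. The relation $\mu(Q)=\mu(Q\cap H^+)+\mu(Q\cap H^-)-\mu(Q\cap H)$ then closes the induction.
\end{itemize}
Evaluating the indicator identity at one point of each cell then yields the formula directly.

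Two small precisions. First, since $\mu$ is only defined on polytopes, $\hat\mu$ can only be defined on bounded cells; this is why you must restrict to cells inside $P$. Second, for lower dimensional $P_i$, ``hyperplanes supporting facets'' does not make $P_i$ a union of cells. Instead take, for each $P_i$, the bounding hyperplanes of finitely many closed halfspaces whose intersection is $P_i$; this includes hyperplanes containing $\mathrm{aff}\,P_i$.

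Finally, the index range in the displayed formula should be $1\le i_1<\dots<i_r\le N$. As printed ($1<i_1$) it fails already for $N=1$; your indicator identity uses the correct range.
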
	
		The following result is due to Hadwiger \cite{HadwigerVorlesungenuberInhalt1957} (see also \cite{SchneiderConvexbodiesBrunn2014}*{Theorem~6.4.3}).
		\begin{proposition}\label{proposition:VolumeCharacterizationHomMeasurable}
			Every translation invariant and $n$-homogeneous valuation on $\calP(\R ^n)$ is a multiple of the Lebesgue measure.
		\end{proposition}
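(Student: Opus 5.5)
The plan is to first show that $\mu$ is simple, and then to pin down $\mu$ on full-dimensional polytopes by dissection arguments in which the degree-$n$ homogeneity stands in for the continuity assumption of the usual proofs.

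\emph{Step 1: $\mu$ is simple.} I would prove by induction on $k<n$ the following claim: for every $k$-dimensional affine subspace $E$, the restriction of $\mu$ to $\calP(E)$ vanishes. Fix such an $E$. By the induction hypothesis, $\mu|_{\calP(E)}$ is a simple, translation invariant valuation on the $k$-dimensional space $E$. For such valuations I would establish
\begin{align*}
	\mu(mP)=m^k\mu(P) \quad\text{for all integers } m\ge 1 \text{ and } P\in\calP(E).
\end{align*}
Every polytope in $E$ can be dissected into simplices, and simplicity together with \autoref{theorem:InclusionExclusionPrinciple} shows that the lower-dimensional intersections do not contribute. So it suffices to treat simplices, and after an affine identification of $E$ with $\R^k$ that carries the simplex to $\Delta_k$ we may take the simplex to be $\Delta_k$. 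Apply \autoref{theorem:SimplexDeomposition} with $s,t\in\mathbb{N}$ and use \autoref{corollary:SimplexDecompositionIntersections} to see that all overlaps are lower dimensional. This writes $\mu((s+t)\Delta_k)$ as a sum of the values $\mu(s\Delta_j\times t\Delta_{k-j})$. The prisms $\Delta_j\times\Delta_{k-j}$ can again be cut into translates of simplices of dimension $k$. An induction on $m$ then gives the displayed identity; this is the classical computation that a simple translation invariant valuation scales like volume under integer dilations. Combining it with the assumed $n$-homogeneity gives $m^n\mu(P)=m^k\mu(P)$ for all $m$, so $\mu(P)=0$ since $k<n$.

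\emph{Step 2: the full-dimensional case.} Now $\mu$ is simple and translation invariant on $\calP(\R^n)$. Set $c:=\mu([0,1]^n)$. Dissecting the unit cube into $m^n$ translates of $\tfrac1m[0,1]^n$ gives $\mu=c\vol$ on cubes with rational side length. Dissecting further, the same identity holds for rational boxes, and then for all rational polytopes, since these are finite unions of simplices with rational vertices and each such simplex is equidissectable, up to translations, with a union of rational boxes. Next, $n$-homogeneity under arbitrary real $\lambda>0$ extends the identity to $\mu(\lambda P)=c\vol(\lambda P)$ for $P$ rational and $\lambda>0$.

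To reach an arbitrary simplex I would use that the simple, translation invariant valuation $\mu-c\vol$ is invariant under \emph{equidissection}. By the classical Hadwiger--Glur theory, two polytopes of equal volume are equidissectable into pieces related by translations together with a central reflection, and for simple translation invariant valuations the central reflection can be eliminated by a further dissection. Since every polytope has the same volume as some real dilate $\lambda C$ of the unit cube, and we have shown $\mu(\lambda C)=c\vol(\lambda C)$, this gives $\mu(P)=c\vol(P)$ for every polytope $P$.

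\emph{Main obstacle.} I expect the last step to be the hardest: without continuity, the only scaling information is the uniform dilation $\lambda$. Independent real scalings of different coordinates are not available, and additive Cauchy-type pathologies on boxes, such as derivation-type functions, must be excluded. My first attempt would be to route everything through the translation-scissors-congruence argument for a \emph{single} dilate of a fixed simplex, as sketched above, rather than through products of intervals. If that route needs more than is available in the paper, I would instead cite Hadwiger's proof (\cite{SchneiderConvexbodiesBrunn2014}*{Theorem~6.4.3}), which carries out exactly this reduction.
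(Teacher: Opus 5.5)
\textbf{Your self-contained argument has genuine gaps.} Your fallback, citing Hadwiger's theorem, is exactly what the paper does: it gives no proof and refers to Hadwiger and \cite{SchneiderConvexbodiesBrunn2014}*{Theorem~6.4.3}. The route you sketch, however, rests on dissection claims that are false.

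\textbf{The counterexample.} For $u\in\S^1$ and a polygon $P$, put $J_u(P)=\sum_{F:\,u_F=u}\vol_1(F)-\sum_{F:\,u_F=-u}\vol_1(F)$. The sum runs over the edges $F$ of $P$ with outer normal $u_F$, and $J_u=0$ on lower dimensional polytopes. This is Hadwiger's planar invariant. It is a simple, translation invariant, odd valuation, homogeneous of degree $1$. It vanishes on every box, indeed on every centrally symmetric polygon, but $J_u(\Delta)\neq 0$ for a triangle having an edge with normal $u$.

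\textbf{What fails.} First, the lemma in Step 1 is false, since $J_u(m\Delta)=mJ_u(\Delta)$. Geometrically, $m\Delta$ splits into $\binom{m+1}{2}$ translates of $\Delta$ and $\binom{m}{2}$ translates of $-\Delta$, so no induction on $m$ can give $m^k$. Second, in Step 2 no triangle, rational or not, is translation-equidecomposable with a union of boxes. Third, the central reflection cannot be removed by dissection for simple translation invariant valuations. Fourth, Hadwiger--Glur is a planar theorem. For $n\ge 3$, Hadwiger functionals attached to flags of even length are even. For example, in $\R^3$ let $\psi(P)=J_{e_1}(F(P,e_3))-J_{e_1}(F(P,-e_3))$, where $F(P,\pm e_3)$ are the support sets. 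This is a simple translation invariant valuation with $\psi(-P)=\psi(P)$, it vanishes on cubes, and $\psi(\mathrm{conv}\{0,e_1,e_2,e_3\})=1$. So equal volume does not imply equidecomposability under translations and point reflections.

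\textbf{Step 1 is easily repaired.} What \autoref{theorem:SimplexDeomposition} gives, together with an induction on dimension for the prism terms, is McMullen's polynomiality: $m\mapsto\mu(mP)$ is a polynomial of degree at most $\dim E<n$. Comparing this with $m^n\mu(P)$ already forces $\mu(P)=0$.

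\textbf{Step 2 is the real gap.} Every obstruction to translation scissors congruence other than volume is a valuation of degree $<n$ under dilations. The $n$-homogeneity must be used to kill these, whereas you only use it to pass from rational to real dilates of the cube. Your worry about derivation-type pathologies is secondary, since the obstructions already appear for rational polytopes.

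\textbf{A correct route.} Use McMullen's structure theorem for the polytope algebra. The valuation $\mu$ induces a homomorphism on the translation polytope group $\bigoplus_{r=0}^n\Xi_r$. Rational dilation by $\lambda$ acts on $\Xi_r$ by $\lambda^r$, and $\Xi_n\cong\R$ via $\vol_n$. Homogeneity kills $\Xi_r$ for $r<n$, so $\mu=f\circ\vol_n$ with $f$ additive. Finally, $f(\lambda^n x)=\lambda^n f(x)$ for all real $\lambda>0$ makes $f$ linear.

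\textbf{The mechanism in the plane.} Homogeneity and the dissection of $m\Delta$ give $m^2\mu(\Delta)=\binom{m+1}{2}\mu(\Delta)+\binom{m}{2}\mu(-\Delta)$. Hence $\mu(-\Delta)=\mu(\Delta)$, so $2\mu(\Delta)$ is the value of a parallelogram. The example $\psi$ shows that evenness alone no longer suffices for $n\ge 3$. There one has to compare all coefficients arising from \autoref{theorem:SimplexDeomposition}.
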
	
		
		Our proof of \autoref{maintheorem:HadwigerSphereSimple} relies on the polytopal version of Hadwiger's characterization of the intrinsic volumes from \cite{KnoerrRigidmotioninvariant2026}. More specifically, we require the following vanishing result.
		\begin{proposition}[\cite{KnoerrRigidmotioninvariant2026}*{Proposition~5.4}]
			\label{proposition:simpleValuationsVanish}
			Let $0\le k\le n-1$ and $\mu:\calP(\R^n)\rightarrow\R$ be a measurable, translation and $\SO(n)$-invariant, as well as simple valuation that is homogeneous of degree $k$. Then $\mu=0$.
		\end{proposition}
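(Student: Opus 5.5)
The plan is to prove the proposition by induction on $n$. Two ingredients drive it: a dissection argument that plays homogeneity off against simplicity, and the Cauchy functional equation, which is where measurability enters.

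\emph{Base case $n=1$.} Here $k=0$. Translation invariance and homogeneity give $\mu([0,t])=c$ for all $t>0$. Dissecting $[0,2t]=[0,t]\cup[t,2t]$ and using that $\mu$ vanishes on points yields $c=2c$, so $c=0$.

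\emph{Step 1: prisms.} Fix $A\in\calP(\R^{n-1})$. Dissecting $A\times[0,s+t]$ into $A\times[0,s]$ and a translate of $A\times[0,t]$, and using simplicity on the common facet, shows that $t\mapsto \mu(A\times[0,t])$ is additive. It is also measurable, since $t\mapsto A\times[0,t]$ is continuous into $\calP(\R^n)$. Hence it is linear, $\mu(A\times[0,t])=t\,\nu(A)$.

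The functional $\nu$ is a measurable, translation invariant valuation on $\calP(\R^{n-1})$. It is simple, because $A$ lower dimensional makes $A\times[0,t]$ lower dimensional. It is $\SO(n-1)$-invariant, via the rotations fixing $e_n$. Comparing $\mu(\lambda A\times[0,\lambda t])=\lambda^k\mu(A\times[0,t])$ with linearity in $t$ shows that $\nu$ is homogeneous of degree $k-1$.

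If $k\ge1$, then $k-1\le n-2$, so the induction hypothesis gives $\nu=0$. If $k=0$, then $\nu$ would have degree $-1$; but $\nu(\lambda A)=\lambda^{-1}\nu(A)$ together with measurability and the valuation property forces $\nu=0$. For instance, dissecting $2A$ for a box $A$ gives $2^{n-1}\nu(A)=\tfrac12\nu(A)$, and one then passes to general $A$ by repeating the simplex argument of Step 2 one dimension lower.

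By $\SO(n)$-invariance, $\mu$ therefore vanishes on all right prisms in every direction. The same argument, applied with $A\times B$ for $B\in\calP(\R^{n-j})$ full dimensional in place of the interval, and inducting on $j$, shows that $\mu$ vanishes on all orthogonal products $A\times B$ with $\dim A,\dim B\ge1$. Here $B\mapsto\mu(A\times B)$ is a simple translation invariant valuation on $\R^{n-j}$ to which the induction applies after fixing $A$.

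\emph{Step 2: simplices.} Apply \autoref{theorem:SimplexDeomposition} with $s=t=1$ and the Inclusion-Exclusion Principle (\autoref{theorem:InclusionExclusionPrinciple}). All pairwise intersections are lower dimensional by \autoref{corollary:SimplexDecompositionIntersections}, so simplicity gives
\begin{align*}
\mu(2\Delta_n)=\sum_{j=0}^n\mu\left(\Delta_j\times[\Delta_{n-j}+(1,\dots,1)]\right).
\end{align*}
For $1\le j\le n-1$ the summands are orthogonal products of positive-dimensional polytopes, hence vanish by Step 1. The terms $j=0,n$ are translates of $\Delta_n$. Thus $2^k\mu(\Delta_n)=\mu(2\Delta_n)=2\mu(\Delta_n)$, so $\mu(\Delta_n)=0$ unless $k=1$.

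To reach an arbitrary simplex $L(\Delta_n)+x$ with $L\in\GL(n,\R)$, I would run the argument for the valuation $\mu\circ L$. This valuation is still simple, measurable, translation invariant and $k$-homogeneous, and Step 1's vanishing only needs invariance under rotations within the two factors. So I would first redo Step 1 for products $A\times B$ with respect to an arbitrary splitting $\R^n=E\oplus F$, deducing vanishing there from $\SO$-invariance in suitable coordinates. Since every polytope is dissected into simplices and $\mu$ is simple, $\mu=0$.

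\emph{Main obstacles.} I expect the main obstacle to be twofold. First, transferring the vanishing on products from orthogonal splittings to oblique ones $E\oplus F$: invariance only controls orthogonal frames, so I would instead inductively show that $\nu_B(A)=\mu(A+B)$, for $B\subset F$ fixed, is a simple translation invariant valuation on $E$ that is linear in dilations of $B$. Then I would use measurability and Cauchy's equation in the scaling parameters to force a polynomial, mixed-homogeneous structure incompatible with total degree $k<n$. Second, the residual case $k=1$, where the dissection identity is vacuous. For $k=1$, $n\ge2$, I would try a different dissection, for example $3\Delta_n$ via two applications of \autoref{theorem:SimplexDeomposition} with $(s,t)=(1,2)$. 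This should produce translates of $\Delta_n$ and of $-\Delta_n$-type pieces, giving independent linear relations that force $\mu(\Delta_n)=\mu(-\Delta_n)=0$, with $\SO(n)$-invariance used to relate the odd part.
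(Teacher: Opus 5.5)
The paper does not prove this proposition; it quotes it from the author's earlier work. Your proposal therefore has to stand on its own, and it has a genuine gap: measurability is never used where it matters. It appears only in the Cauchy equation for $t\mapsto\mu(A\times[0,t])$, and the classical non-measurable counterexamples are already linear there.

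Take $n=3$, $k=1$. Choose an additive $f\colon\R\to\R$ with $f(\pi)=0$ and $f(\arccos\frac13)=1$. This is possible since $\arccos\frac13\notin\pi\mathbb{Q}$, and such an $f$ cannot be measurable. Let $D_f(P)=\sum_e\ell(e)f(\theta_e)$, summed over the edges of a full dimensional $P$ with lengths $\ell(e)$ and dihedral angles $\theta_e$, and set $D_f=0$ on lower dimensional polytopes. By the classical theory of the Dehn invariant, this is a simple valuation, invariant under all isometries (in particular even), and homogeneous of degree $1$. It has the following behaviour:
\begin{itemize}
\item it vanishes on every right prism: the vertical edges contribute $t\,f((m-2)\pi)=0$ and the horizontal ones $f(\pi/2)=0$;
\item hence it vanishes on every oblique prism;
\item it vanishes on $\Delta_3$, since six isometric copies of $\Delta_3$ tile the cube;
\item it is nonzero on the regular tetrahedron.
\end{itemize}

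Your treatment of $k=1$ uses further dissections of $3\Delta_n$ or $3L(\Delta_n)$, together with translation invariance, homogeneity and rotation invariance. These produce only identities that every simple, translation and $\SO(n)$-invariant, $1$-homogeneous valuation satisfies, measurable or not. For $D_f$ they are all consistent with $\mu(S)=\mu(-S)\neq0$, so they cannot force $\mu=0$.

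Since your Step 1 for $(n,k)$ invokes the case $(n-1,k-1)$, the whole induction rests on this missing case. For example, $(4,2)$ needs $(3,1)$, and the sum over $2$-faces $\sum_F\vol_2(F)f(\theta_F)$ is again a non-measurable counterexample in $\R^4$. What is missing is an argument in which measurability genuinely excludes such Dehn-type invariants on arbitrary simplices $L(\Delta_n)$, where $\mu\circ L$ is no longer rotation invariant.

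Two further steps are also unjustified.
\begin{itemize}
\item \emph{Orthogonal products with $\dim B\ge2$.} When you extend Step 1 to $A\times B$, the valuation $B\mapsto\mu(A\times B)$ is homogeneous only jointly with $A$, not in $B$ alone, so the induction hypothesis does not apply to it. You would first need to split $(s,t)\mapsto\mu(sA\times tB)$ into bihomogeneous parts for real, not merely rational, $s,t$, and that splitting itself has to be derived from measurability.
\item \emph{Oblique splittings $E\oplus F$.} The partial valuations $A\mapsto\mu(A+B)$ are not invariant under rotations of $E$, so nothing eliminates their lower-degree components. A mixed-homogeneous structure is not by itself incompatible with total degree $k<n$. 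For instance, take $u\perp F$ and the difference of the $(n-1)$-volumes of the support sets of $P$ in directions $u$ and $-u$. This is a measurable, simple, translation invariant valuation, and on such products it is homogeneous of bidegree $(\dim E-1,\dim F)$.
\end{itemize}

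The following parts of your argument are correct: the base case, the Cauchy argument for interval factors, and the orthoscheme identity $2^k\mu(\Delta_n)=2\mu(\Delta_n)$ once products are known to vanish. The case $k=0$ is also correct, and in fact needs no measurability.
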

		
	\subsection{Spherical polytopes and the gnomonic projection}
		\label{section:sphericalPolytopes}
		We refer to \cite{KlainRotaIntroductiongeometricprobability1997}*{Section 11} for a discussion of spherical convexity.\\		
		The spherical distance on the unit sphere $\S^n$ is given by $d(u,v)=\arccos(\langle u,v\rangle)$ for $u,v\in\S^n$. If we denote the set of points of distance at most $r>0$ from a set $A$ by $A_r$, then the spherical Hausdorff distance between two closed subsets $A,B\subset\S^n$ is therefore given by
		\begin{align*}
			\delta(A,B)=\inf\{r>0: A\subset B_r, B\subset A_r\}.
		\end{align*}
		For $u\in \S^n$, let $\S^n_+(u):=\{v\in \S^n: \langle u,v\rangle >0\}$ denote the open hemisphere centered at $u$. We consider the gnomonic projection
		\begin{align*}
			G_u:\S^n_+(u)&\rightarrow u^\perp\\
			v&\mapsto \frac{v-\langle u,v\rangle u}{\langle u,v\rangle},
		\end{align*}
		which is a well defined diffeomorphism with inverse $G_u^{-1}(x)=\frac{u+x}{|u+x|}$. Geometrically, the gnomonic projection maps the point $v\in \S^n_+(u)$ to the intersection of the corresponding ray with the affine hyperplane $u+u^\perp$. In particular, for a spherical polytope $P$ contained in $\S^n_+(u)$, its image is obtained by considering the intersection of the convex polyhedral cone 
		\begin{align*}
			\hat{P}=\{tv:v\in P,t\ge0\}\subset \R^{n+1}
		\end{align*}
		with $u+u^\perp$. If we denote the space of spherical polytopes contained in $\S ^n_+(u)$ by $\calP(\S^n_+(u))$, this implies the following (see also \cite{BesauSchusterBinaryoperationsspherical2016}*{Corollary  4.5}).
		\begin{lemma}\label{lemma:gnomonicHomeomorphismPolytopes}
			For every $u\in \S^n$, the gnomonic projection $G_u:\S^n_+(u)\rightarrow u^\perp$ induces a homeomorphism between $\calP(\S^n_+(u))$ and $\calP(u^\perp)$.
		\end{lemma}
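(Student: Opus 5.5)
The plan is to check two things: that $G_u$ induces a bijection between $\calP(\S^n_+(u))$ and $\calP(u^\perp)$, and that this bijection and its inverse are continuous. Continuity will be handled by comparing convergence of sequences on both sides via \autoref{theorem:ConvergenceHausdorff}.

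\textbf{Bijection.} Let $P\in\calP(\S^n_+(u))$ and write $P=\S^n\cap\bigcap_{i}\{v:\langle w_i,v\rangle\ge 0\}$. The cone $\hat P$ is then a polyhedral cone contained in $\{x:\langle u,x\rangle>0\}\cup\{0\}$. Since $P$ is compact in the open hemisphere, $\langle u,\cdot\rangle\ge c>0$ on $P$. Hence $\hat P\cap(u+u^\perp)$ is a bounded intersection of finitely many half-spaces in the affine hyperplane $u+u^\perp$, i.e.\ a polytope. Its translate by $-u$ is exactly $G_u(P)$, so $G_u(P)\in\calP(u^\perp)$. Conversely, let $Q\in\calP(u^\perp)$ with facet inequalities $\langle a_i,x\rangle\le b_i$. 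Then $G_u^{-1}(Q)$ is the set of $v\in\S^n_+(u)$ satisfying $\langle a_i,v\rangle\le b_i\langle u,v\rangle$. These are linear (hemisphere) constraints, and adding $\langle u,v\rangle\ge 0$ only excludes the equator. Since $Q$ is bounded, the cone over $u+Q$ meets $u^\perp$ only in $0$. So $G_u^{-1}(Q)$ is a finite intersection of closed hemispheres lying in the compact set $G_u^{-1}(Q)\subset\S^n_+(u)$, hence it is a spherical polytope in $\S^n_+(u)$. Since $G_u$ is a bijection on points, the induced set map is a bijection.

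\textbf{Continuity.} The spherical Hausdorff metric on compact subsets of $\S^n$ induces the same topology as the Euclidean Hausdorff metric restricted to those subsets, because $d$ and $|\cdot|$ are uniformly equivalent on $\S^n$. A sequence–characterization analogous to \autoref{theorem:ConvergenceHausdorff} holds for compact subsets of a compact metric space. Let $P_j\to P$ in $\calP(\S^n_+(u))$. Then for large $j$ all $P_j$ lie in a fixed compact set $\{\langle u,v\rangle\ge c/2\}$, on which $G_u$ is a homeomorphism onto a compact set. Both conditions of \autoref{theorem:ConvergenceHausdorff} transfer directly under $G_u$, so $G_u(P_j)\to G_u(P)$. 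Conversely, let $Q_j\to Q$ in $\calP(u^\perp)$. The $Q_j$ are uniformly bounded, so their preimages lie in a fixed compact subset of $\S^n_+(u)$, and the same pointwise argument applied with $G_u^{-1}$ gives convergence.

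\textbf{Main obstacle.} The delicate point is the uniform compactness: in both directions we must ensure that the sequence stays inside a compact region where $G_u$ is uniformly continuous. Without this, points of $P_j$ could escape toward the equator and their images toward infinity. On the spherical side this follows from $P$ having positive distance to the equator and Hausdorff convergence. On the Euclidean side it follows from the boundedness of convergent sequences in $\K(u^\perp)$.
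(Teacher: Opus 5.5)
Your proposal is correct and is essentially the paper's argument: the paper justifies the lemma only by observing that $G_u(P)+u=\hat P\cap(u+u^\perp)$ and citing Besau--Schuster, and your bijection step makes exactly this cone-slicing explicit. The one place to phrase more carefully is that the relevant object is the homogenized cone cut out by $\langle u,x\rangle\ge 0$ and $\langle b_iu-a_i,x\rangle\ge 0$; boundedness of $Q$ makes its recession cone $\{y\in u^\perp:\langle a_i,y\rangle\le 0\}$ trivial, so no equatorial points survive. Your continuity argument supplies the topological part that the paper delegates to the citation: you confine all sets to a compact cap $\{\langle u,\cdot\rangle\ge c\}$, where $G_u$ is a homeomorphism, and then transfer the sequential characterization of Hausdorff convergence.
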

		\begin{remark}
			Note in particular that $G_u$ maps full dimensional simplices to full dimensional simplices, where a simplex in $\S^n$ is by definition the spherical convex hull of $n+1$ linearly independent points, or equivalently, the intersection of $\S^n$ with a full dimensional simplicial and pointed cone in $\R^{n+1}$.
		\end{remark}
		
		The interpretation of spherical polytopes as the intersection of polyhedral cones with $\S^n$ gives rise to the following action of $\GL(n+1,\R)$ on $\S^n$: For $g\in\GL(n,\R)$, we define
		\begin{align*}
			g(v):=\frac{gv}{|gv|},\quad\text{for}~v\in \S^n,
		\end{align*}
		where we have the usual action on vectors in $\R^{n+1}$ on the right hand side. It is easy to check that this defines a continuous action of $\GL(n+1,\R)$. Moreover, since it corresponds to the usual action of $\GL(n+1,\R)$ on $\R^{n+1}$, which preserves polyhedral cones, this directly implies the following.
		\begin{lemma}\label{lemma:groupActionSphericalPolytopesWellDefined}
			The action of $\GL(n+1,\R)$ on $\S^n$ induces a well defined and continuous action on $\calP(\S^n)$.
		\end{lemma}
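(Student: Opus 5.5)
The plan is to pass to cones and check the two parts of the statement separately: first that $g(P)$ is again a spherical polytope, then that $(g,P)\mapsto g(P)$ is continuous, with the group axioms coming for free.

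For $P\in\calP(\S^n)$ and $g\in\GL(n+1,\R)$, we have $g(P)=\widehat{g\hat P}\cap\S^n$, since the normalization $v\mapsto gv/|gv|$ only moves points along rays. Write $P=\S^n\cap\bigcap_{i=1}^N\{v:\langle v,w_i\rangle\ge0\}$ with $P\subset\S^n_+(u)$. The cone $g\hat P$ is again a finite intersection of closed half spaces through the origin, namely those with normals $g^{-T}w_i$. The pointedness condition also transfers. By compactness of $P$ there is $\varepsilon>0$ with $\langle u,v\rangle\ge\varepsilon$ for all $v\in P$. Then $\langle g^{-T}u,gv\rangle=\langle u,v\rangle>0$, so $g(P)\subset\S^n_+(g^{-T}u/|g^{-T}u|)$. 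Hence $g(P)\in\calP(\S^n)$.

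The action axioms follow immediately from linearity: $(gh)(v)=g(h(v))$ because positive rescaling commutes with linear maps, and the identity acts trivially.

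For continuity, the plan is to use the gnomonic charts together with \autoref{lemma:gnomonicHomeomorphismPolytopes}.

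1. Fix $(g_0,P_0)$ and a vector $u$ as above, with $P_0\subset\S^n_+(u)$ and margin $\varepsilon$. For $(g,P)$ near $(g_0,P_0)$, the set $P$ lies in the $\varepsilon/2$-neighbourhood of $P_0$ (this neighbourhood is measured in angle, so it can be compared with the inner-product margin), and $g^{-T}$ is close to $g_0^{-T}$.

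2. For such $(g,P)$, we have $\langle g_0^{-T}u,gv\rangle>0$ uniformly over $v\in P$. Hence all the sets $g(P)$ lie in one fixed hemisphere $\S^n_+(u')$, where $u'=g_0^{-T}u/|g_0^{-T}u|$.

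3. In the charts $G_u$ and $G_{u'}$, the map $v\mapsto g(v)$ becomes a projective (linear-fractional) map $x\mapsto G_{u'}\bigl(g(u+x)/|g(u+x)|\bigr)$. On the compact set $G_u(P)$ this map is defined and depends jointly continuously on $(g,x)$. A projective map of this kind sends polytopes to polytopes: it maps segments to segments, and hence convex hulls to convex hulls.

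4. The Hausdorff distance between images is controlled by the supremum distance between the maps on a fixed compact set, together with uniform continuity of a single map. This gives joint continuity of $(g,Q)\mapsto\Phi_g(Q)$ on $\calP(u^\perp)$, where $\Phi_g$ denotes the chart map from step 3 and $Q$ ranges over polytopes near $G_u(P_0)$.

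5. Transferring back through \autoref{lemma:gnomonicHomeomorphismPolytopes} yields continuity on $\GL(n+1,\R)\times\calP(\S^n)$.

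One could instead argue directly on $\S^n$. The map $v\mapsto gv/|gv|$ is jointly continuous and uniformly continuous on $\S^n$. For images of compact sets, $\delta(g(A),h(B))\le\sup_{v}d(g(v),h(v))+\omega_h(\delta(A,B))$, where $\omega_h$ is a modulus of continuity of $h$. This bounds the distance directly, so the chart argument above is only needed to keep everything inside a fixed hemisphere.

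The main obstacle is the uniformity in step 2: the hemisphere must be chosen independently of $(g,P)$ near $(g_0,P_0)$, so that the whole neighbourhood lies in a single domain of one chart.
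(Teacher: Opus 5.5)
Your proposal is correct, and it argues in more detail than the paper does.

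\textbf{Well-definedness.} You use the same cone picture as the paper. The paper only notes that linear maps preserve polyhedral cones, and that if $g\hat P$ contained a line then so would $\hat P$. You instead write down the half-space normals $g^{-T}w_i$ and an explicit hemisphere center $g^{-T}u/|g^{-T}u|$, using $\langle g^{-T}u,gv\rangle=\langle u,v\rangle$. This sidesteps the duality fact the paper uses implicitly, namely that a pointed closed cone minus the origin lies in an open half-space.

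\textbf{Continuity.} The paper gives no argument beyond ``this directly implies''. Your direct estimate
$\delta(g(A),h(B))\le\sup_{v\in\S^n}d(g(v),h(v))+\omega_h(\delta(A,B))$
is already a complete proof of joint continuity, when combined with uniform continuity of $(g,v)\mapsto gv/|gv|$ on $K\times\S^n$ for a compact neighbourhood $K$ of $h$.

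As a result, steps 1--5 through the gnomonic charts are correct but unnecessary. The ``main obstacle'' you identify in step 2 is not a real obstacle. The spherical Hausdorff metric is defined on all closed subsets of $\S^n$, and you already showed $g(P)\in\calP(\S^n)$ for each $(g,P)$ separately. So no common hemisphere for a whole neighbourhood of $(g_0,P_0)$ is needed. This also means your closing claim that the chart argument is ``needed to keep everything inside a fixed hemisphere'' is inaccurate: it is not needed at all.
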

		Implicit in the previous result is that the polytope $g(P)$ is again properly contained in an open hemisphere, which is the case since otherwise the corresponding cone in $\R^{n+1}$ would contain an at least $1$-dimensional subspace, so the same would hold for the cone corresponding to $P$.\\
		
		We will be interested in the interaction of this group action with the gnomonic projection. Consider the action of the affine group on $u^\perp$. For $A\in\GL(u^\perp)$ and $x_0\in u^\perp$, define $g^{u,x_0}_{A}\in\GL(n+1,\R)$ by
		\begin{align*}
			g^{u,x_0}_{A}|_{u^\perp}=&A,\\
			g^{u,x_0}_{A} u=& u+x_0.
		\end{align*}
		Note that this uniquely defines $g^{u,x_0}_{A}$ since $\R^{n+1}=\R u\oplus u^\perp$. We have the following simple relation.
		\begin{lemma}\label{lemma:EquivarianceAction}
			Fix $u\in\S^n$, $A\in\GL(u^\perp)$ and $x_0\in u^\perp$. Then for $x\in u^\perp$,
			\begin{align*}
				G^{-1}_u(Ax+x_0)=g^{u,x_0}_{A} (G_u^{-1}(x)).
			\end{align*}
		\end{lemma}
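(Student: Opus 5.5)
The identity follows from a direct computation: apply $g^{u,x_0}_{A}$ to the vector $u+x$ in $\R^{n+1}$ and then normalize. I would carry it out in three short steps.

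First, recall that $G_u^{-1}(x)=\frac{u+x}{|u+x|}$ for $x\in u^\perp$. The action of $g\in\GL(n+1,\R)$ on $\S^n$ is $g(v)=\frac{gv}{|gv|}$, and this expression is invariant under replacing $v$ by $\lambda v$ for any $\lambda>0$, by linearity of $g$. Hence
\begin{align*}
	g^{u,x_0}_{A}\bigl(G_u^{-1}(x)\bigr)=\frac{g^{u,x_0}_{A}(u+x)}{|g^{u,x_0}_{A}(u+x)|}.
\end{align*}

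Second, use the decomposition $\R^{n+1}=\R u\oplus u^\perp$ and the defining properties of $g^{u,x_0}_{A}$. Since $x\in u^\perp$, we have $g^{u,x_0}_{A}x=Ax$, and by definition $g^{u,x_0}_{A}u=u+x_0$. Together these give
\begin{align*}
	g^{u,x_0}_{A}(u+x)=u+x_0+Ax=u+(Ax+x_0).
\end{align*}
Here $Ax+x_0\in u^\perp$ because $A\in\GL(u^\perp)$ and $x_0\in u^\perp$. Normalizing therefore yields exactly $G_u^{-1}(Ax+x_0)$.

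Third, check that the objects are well defined. The vector $u+(Ax+x_0)$ is nonzero, since its $u$-component equals $1$. Its normalization lies in $\S^n_+(u)$, because its inner product with $u$ is $1/|u+Ax+x_0|>0$. So both sides are defined and agree.

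No step is a real obstacle. The only points needing care are the positive scale invariance of the action, which is what allows us to drop the normalization $|u+x|$, and keeping track of the fact that $x$ and $x_0$ lie in $u^\perp$.
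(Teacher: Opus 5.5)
Your proof is correct and is exactly the direct computation the paper has in mind. The paper states this lemma as a ``simple relation'' without writing out a proof, and your steps supply it: positive scale invariance of the action, the identity $g^{u,x_0}_{A}(u+x)=u+(Ax+x_0)$, and normalization.
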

		We will abuse notation slightly and write $g^{u_0,x_0}_t\in \GL(n+1,\R)$ for the  the unique linear map satisfying 
		\begin{align*}
			g^{u,x_0}_t|_{u^\perp}=&t Id_{u^\perp},\\
			g^{u,x_0}_tu=&u+x_0.
		\end{align*} 
		Note that these maps satisfy the following compatibility condition.
		\begin{corollary}\label{corollary:ScalingMapsTransition}
			For $u\in \S^n$, $x\in u^\perp$, and $V\in \SO(n+1)$,
			\begin{align*}
				Vg^{u,x}_tV^{-1}=g^{Vu,Vx}_t.
			\end{align*}
		\end{corollary}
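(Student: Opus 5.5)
Since both sides of the identity are linear maps of $\R^{n+1}$, I plan to check that they agree on the decomposition $\R^{n+1}=\R Vu\oplus (Vu)^\perp$, which is exactly the splitting that uniquely determines $g^{Vu,Vx}_t$. The only inputs are that $V\in\SO(n+1)$ is orthogonal and the defining properties of $g^{u,x}_t$. I also note that $g^{Vu,Vx}_t$ is well defined: $Vu\in\S^n$, and $Vx\in (Vu)^\perp$ because $\langle Vx,Vu\rangle=\langle x,u\rangle=0$.

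First I evaluate on $Vu$. We have $V^{-1}(Vu)=u$, so $g^{u,x}_t u=u+x$ and therefore
\begin{align*}
	Vg^{u,x}_tV^{-1}(Vu)=V(u+x)=Vu+Vx.
\end{align*}
This agrees with $g^{Vu,Vx}_t(Vu)$ by definition.

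Next I take $w\in (Vu)^\perp$. Since $V$ is orthogonal, $V^{-1}w\in u^\perp$: indeed $\langle V^{-1}w,u\rangle=\langle w,Vu\rangle=0$. Hence
\begin{align*}
	Vg^{u,x}_tV^{-1}w=V(tV^{-1}w)=tw,
\end{align*}
so the restriction of $Vg^{u,x}_tV^{-1}$ to $(Vu)^\perp$ is $t\,Id_{(Vu)^\perp}$. This is again the defining property of $g^{Vu,Vx}_t$.

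Because a linear map on $\R^{n+1}=\R Vu\oplus(Vu)^\perp$ is determined by its value on $Vu$ and its restriction to $(Vu)^\perp$, the two maps coincide. There is no real obstacle. The one point to keep in view is that orthogonality of $V$ is what makes $V^{-1}$ carry $(Vu)^\perp$ into $u^\perp$; this is also why the statement is restricted to $V\in\SO(n+1)$ rather than to general elements of $\GL(n+1,\R)$.
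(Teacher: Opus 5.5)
Your proposal is correct. The paper states this identity without a written proof, as an immediate consequence of the definitions. Your check on the splitting $\R^{n+1}=\R\,Vu\oplus(Vu)^\perp$, using orthogonality of $V$ to send $(Vu)^\perp$ to $u^\perp$, is exactly the verification that is implicitly intended.
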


		The following result is one of the key ingredients for the proofs of our main results.
		\begin{lemma}\label{lemma:LimitCoordinateChangeSimplex}
			Let $\Delta_0\in\calP(\S^n_+(u_0))$ be a full dimensional simplex with vertex $u_0\in \Delta_0$. For every $v\in \calP(\S^n_+(u_0))$, the simplices defined for $t>0$ small enough by
			\begin{align*}
				\Delta_t:=\frac{1}{t}\left[G_{v} (g_t^{u_0,G_{u_0}(v)}(\Delta_0))\right]\in \calP(v^\perp)
			\end{align*}
			have a vertex at the origin in $v ^\perp$ and converge for $t\rightarrow0$ to a full dimensional simplex $\Delta$ in $v^\perp$ with vertex $0\in \Delta$.
		\end{lemma}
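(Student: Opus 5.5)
The plan is to compute everything explicitly on the level of cones in $\R^{n+1}$, using that $G_v$ and the $\GL(n+1,\R)$-action both respect spherical convex hulls. I read the hypothesis as $v\in\S^n_+(u_0)$, and set $x:=G_{u_0}(v)\in u_0^\perp$ and $c:=|u_0+x|>0$. Then $v=(u_0+x)/c$, because $G_{u_0}^{-1}(x)=v$.

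First I describe $\Delta_0$ by its vertices. One vertex is $u_0$. Since $\Delta_0\subset\S^n_+(u_0)$, each of the other $n$ vertices lies on the ray through $u_0+y_i$ with $y_i:=G_{u_0}(\text{vertex})\in u_0^\perp$. Because $\Delta_0$ is full dimensional, the vectors $u_0,u_0+y_1,\dots,u_0+y_n$ are linearly independent, so $y_1,\dots,y_n$ is a basis of $u_0^\perp$. The cone $\hat\Delta_0$ is generated by these $n+1$ vectors. Since $g:=g_t^{u_0,x}$ acts linearly on cones, $\widehat{g(\Delta_0)}$ is generated by
\begin{align*}
	g(u_0)=u_0+x=cv,\qquad g(u_0+y_i)=cv+ty_i,\quad 1\le i\le n.
\end{align*}
In particular $g(\Delta_0)$ has the vertex $v$. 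Its other vertices converge to $v$ as $t\to0$, so for small $t>0$ the simplex $g(\Delta_0)$ lies in $\S^n_+(v)$ and $G_v$ may be applied.

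Next I use that $G_v$ maps a spherical simplex in $\S^n_+(v)$ to the Euclidean simplex spanned by the images of its vertices. This holds because the image is the section of the cone with $v+v^\perp$, as in the discussion before \autoref{lemma:gnomonicHomeomorphismPolytopes}. Hence $\Delta_t$ is the convex hull of $\frac1t G_v(cv)=0$ and the points $\frac1t G_v(cv+ty_i)$. Using $\langle cv+ty_i,v\rangle=c+t\langle v,y_i\rangle$, these points are
\begin{align*}
	\frac1t G_v(cv+ty_i)=\frac{y_i-\langle v,y_i\rangle v}{c+t\langle v,y_i\rangle}=\frac{\pi_v y_i}{c+t\langle v,y_i\rangle},
\end{align*}
where $\pi_v$ denotes the orthogonal projection onto $v^\perp$. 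This shows that $0$ is a vertex of $\Delta_t$, and that the vertices converge to $\pi_v y_i/c$ as $t\to0$.

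Finally I identify the limit. The map $\pi_v|_{u_0^\perp}:u_0^\perp\to v^\perp$ is injective: its kernel consists of multiples of $v$ lying in $u_0^\perp$, and $\langle v,u_0\rangle>0$ rules out any nonzero such multiple. So $\pi_v y_1,\dots,\pi_v y_n$ form a basis of $v^\perp$. Thus $\Delta:=\operatorname{conv}\{0,\pi_v y_1/c,\dots,\pi_v y_n/c\}$ is a full dimensional simplex in $v^\perp$ with vertex $0$. Convergence of the vertices implies Hausdorff convergence of the convex hulls; alternatively one can check the two conditions of \autoref{theorem:ConvergenceHausdorff} via barycentric coordinates, or invoke \autoref{corollary:ConvergenceSimplices}. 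The only delicate points are the bookkeeping that $G_v$ really takes vertices to vertices for small $t$, and the nondegeneracy of the limit, which is the injectivity argument above. I expect no serious obstacle beyond these.
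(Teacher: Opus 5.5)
Your proof is correct, but it argues differently from the paper.

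\textbf{The paper's route.} It works pointwise. It computes
\[
F(y)=\frac{d}{dt}\Big|_0 G_v\bigl(g_t^{u_0,G_{u_0}(v)}(y)\bigr)=\langle v,u_0\rangle\bigl[G_{u_0}(y)-\langle G_{u_0}(y),v\rangle v\bigr]
\]
for all $y$ near $\Delta_0$. It then shows that the differential of $F$ is invertible at the vertex $u_0$, so $F(\Delta_0)$ has nonempty interior. Finally it appeals to \autoref{corollary:ConvergenceSimplices}, and hence to Blaschke selection and \autoref{theorem:ConvergenceHausdorff}, to conclude that the limit exists and is a full dimensional simplex.

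\textbf{Your route.} You use that $G_v\circ g_t^{u_0,x}$ is induced by a linear map on cones. It therefore sends spherical convex hulls to Euclidean convex hulls, so it suffices to track the $n+1$ vertices. The explicit formula $\pi_v y_i/(c+t\langle v,y_i\rangle)$ gives convergence of the vertices, and hence Hausdorff convergence directly. Nondegeneracy reduces to injectivity of $\pi_v|_{u_0^\perp}$. This is the same fact the paper verifies when it checks that the differential of $F$ at $u_0$ is invertible.

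\textbf{Same limit.} Since $1/c=\langle v,u_0\rangle$, your limit $\operatorname{conv}\{0,\pi_v y_1/c,\dots,\pi_v y_n/c\}$ coincides with the paper's $F(\Delta_0)$. In fact your computation shows that $F=\langle v,u_0\rangle\,\pi_v\circ G_{u_0}$. So $F(\Delta_0)$ is visibly a linear image of the simplex $G_{u_0}(\Delta_0)$, which makes the compactness argument unnecessary.

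\textbf{What each approach buys.} Yours is more elementary and gives explicit descriptions of both $\Delta_t$ and $\Delta$. The paper's produces the pointwise derivative map $F$. That map is the object used in the later remark, which interprets the key lemma as a substitute for a chain rule.
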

		\begin{proof}
			Note that $v_0=g_t^{u_0,G_{u_0}(v)}(u_0)$ is a vertex of $g_t^{u_0,G_{u_0}(v)}(\Delta_0)$, so these simplices have $0$ as a vertex by construction.	Fix $t_0>0$ small enough such that the simplex $\Delta_t$ is well defined for all $t\in (0,t_0)$. Then the map
			\begin{align*}
				(t,y)\mapsto G_{v} (g_t^{u_0,G_{u_0}(v)}(y))
			\end{align*}
			is well defined on a neighborhood of $\Delta_0\subset \S^n_+(u_0)$ for all $t\in[0,t_0)$ and satisfies
			\begin{align*}
				\lim_{t\rightarrow0}\frac{G_{v} (g_t^{u_0,G_{u_0}(v)}(y))}{t}=\frac{d}{dt}\Big|_0G_{v} (g_t^{u_0,G_{u_0}(v)}(y)).
			\end{align*}
			If we write $y=(y-\langle y,u_0\rangle u_0)+\langle y,u_0\rangle u_0$, we obtain
			\begin{align*}
				G_v\left(g_t^{u_0,G_{u_0}(v)}(y)\right)=&G_v\left(\frac{t(y-\langle y,u_0\rangle u_0)+\langle y,u_0\rangle [u_0+G_{u_0}(v)]}{|t(y-\langle y,u_0\rangle u_0)+\langle y,u_0\rangle [u_0+G_{u_0}(v)]|}\right)\\
				=&\frac{f(t)-\langle f(t),v\rangle v}{\langle f(t),v\rangle}
			\end{align*}
			for $f(t)=tG_{u_0}(y)+\langle v,u_0\rangle^{-1}v$, where we used $u_0+G_{u_0}(v)=\langle v,u_0\rangle^{-1}v$, so
			\begin{align*}
				\frac{d}{dt}\Big|_0G_{v} (g_t^{u_0,G_{u_0}(v)}(y))=&\frac{f'(0)\langle f(0),v\rangle-f(0)\langle f'(0),v\rangle}{\langle f(0),v\rangle^2}\\
				=&\langle v,u_0\rangle\left[G_{u_0}(y)- \langle G_{u_0}(y),v\rangle v\right]=:F(y).
			\end{align*}
			Note that this is well defined for all $y\in \S^n_+(u_0)$ and thus defines a smooth function $F:\S^n_+(u_0)\rightarrow v^\perp$. More explicitly, 
			\begin{align*}
				F(y)=&\frac{\langle v,u_0\rangle}{\langle y,u_0\rangle}[y-\langle y,u_0\rangle u_0- \langle y,v\rangle v+\langle y,u_0\rangle \langle u_0,v\rangle v].
			\end{align*}
			Thus, if $\gamma(t)$ is a curve with $\gamma(0)=u_0$ and $\gamma'(0)=x\in u_0^\perp$, then 
			\begin{align*}
				\frac{d}{dt}\Big|_0F\left(\gamma(t)\right)=\langle v,u_0\rangle[x-\langle x,v\rangle v]&.
			\end{align*}
			Note that this does not vanish for $x\ne 0$: Either $x\perp v$ or $\langle x,v\rangle\ne0$, and then $\langle x-\langle x,v\rangle v,u_0\rangle=-\langle x,v\rangle \langle  v,u_0\rangle\ne0$ since both factors do not vanish. Thus the differential of $F$ is invertible in $u_0$. Since $u_0\in \Delta_0$, this shows that
			\begin{align*}
				\Delta:=F(\Delta_0)\subset v^\perp
			\end{align*}
			has nonempty interior. Since $(t,y)\mapsto \frac{1}{t}G_{v} (g_t^{u_0,G_{u_0}(v)}(y))$ extends to a continuous function to $t=0$ (with limit $F(y)$), \autoref{corollary:ConvergenceSimplices} shows that this set is a full dimensional simplex with $\lim\limits_{t\rightarrow0}\Delta_t=\Delta$. In particular, $0$ is a vertex of $\Delta$.
		\end{proof}

\section{Affine smooth valuations on $\R^n$}
	\subsection{Differentiability properties of affine smooth valuations}
	\label{section:differentiabilityProperties}
		\begin{theorem}\label{theorem:DifferentiabilityAffineSmooth}
		Let $\mu:\calP(\R^n)\rightarrow\R$ be a valuation and assume that the map
		\begin{align*}
			\Aff(n,\R)&\rightarrow \R\\
			A&\mapsto \mu(A(P))
		\end{align*} is smooth for every fixed $P\in\calP(\R^n)$. Then the map
		\begin{align*}
			%\label{eq:smoothFunctionScaling}
			\begin{split}[0,\infty)\times\GL(n,\R)\times  \R^n&\rightarrow\R\\
				(t,g,x)&\mapsto \mu(g[tP+x])
			\end{split}
		\end{align*}
		is smooth for every fixed $P\in\calP(\R^n)$.
	\end{theorem}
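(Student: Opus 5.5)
The strategy is to reduce to simplices and then use the Canonical Simplex Decomposition, \autoref{theorem:SimplexDeomposition}, together with the Inclusion-Exclusion Principle, \autoref{theorem:InclusionExclusionPrinciple}. Together they express the scaled simplex $t\Delta$ in terms of affine images of fixed polytopes, with the scaling parameter $1+t$ in place of $t$. Since $1+t$ stays away from $0$, this removes the degeneracy at $t=0$.

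For $t>0$ there is nothing to prove. Indeed, $(t,g,x)\mapsto g\circ(t\,\cdot+x)$ is a smooth map into $\Aff(n,\R)$, and $A\mapsto\mu(A(P))$ is smooth by assumption. The whole difficulty is the one-sided smoothness at $t=0$, where $t\,\cdot+x$ leaves the affine group.

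\emph{Reduction to simplices.} Triangulate $P$ into simplices $S_1,\dots,S_N$. Then $g[tP+x]=\bigcup_i g[tS_i+x]$, and every intersection $\bigcap_{i\in I}g[tS_i+x]=g[t(\bigcap_{i\in I}S_i)+x]$ is $g[tF+x]$ for a face $F$ of the triangulation, hence a simplex. By \autoref{theorem:InclusionExclusionPrinciple}, it therefore suffices to treat a simplex $S$ of any dimension $k\le n$.

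Write $S=A(\Delta_k\times\{0\})$ with $A=L\cdot+c$ an invertible affine map of $\R^n$. Then
\begin{align*}
	g[tS+x]=(gL)\left[t(\Delta_k\times\{0\})+L^{-1}(tc+x)\right].
\end{align*}
The new group element and translation depend smoothly on $(t,g,x)$. So it suffices to prove that
\begin{align*}
	(t,g,y)\mapsto\mu\left(g\left[Q\times(t\Delta_m+y)\right]\right)
\end{align*}
is smooth on $[0,\infty)\times\GL(n,\R)\times\R^m$, for every $m\le n$ and every fixed polytope $Q\subset\R^{n-m}$ (after reordering coordinates). This generalized form is what makes the induction close; the case $Q=\{0\}$ recovers the simplex case.

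\emph{Induction on $m$.} For $m=0$ the set is $Q\times\{y\}$, which is a translate of a fixed polytope, so smoothness is immediate. For $m\ge1$, apply \autoref{theorem:SimplexDeomposition} with $s=1$ to obtain $(1+t)\Delta_m=\bigsqcup_{j=0}^m P_j(1,t)$. Take the product with $Q$, translate by $y$, apply $g$, and use inclusion-exclusion together with \autoref{corollary:SimplexDecompositionIntersections}.

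The left-hand side is $\mu(h_{t,g,y}(Q\times\Delta_m))$, where $h_{t,g,y}=g\circ(\mathrm{id}\times((1+t)\,\cdot+y))$. This is an affine map depending smoothly on $t\in(-1,\infty)$, so the left-hand side is smooth in $(t,g,y)$ by hypothesis.

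On the right-hand side, the term with $I=\{0\}$ is $\mu(g[Q\times(t\Delta_m+(1,\dots,1)+y)])$, which is exactly the function we want to control. Every other $I$ has $\max I=b\ge1$. Its intersection is
\begin{align*}
	Q\times\Delta_{\min I}\times\{(1,\dots,1)\}\times\left(t\Delta_{m-b}+(1,\dots,1)\right)+y,
\end{align*}
which is of the form $Q'\times(t\Delta_{m-b}+y')$ with $Q'$ a fixed polytope, $m-b<m$, and $y'$ depending affinely on $y$. By the induction hypothesis, these terms are smooth in $(t,g,y)$ on $[0,\infty)$. Solving for the $I=\{0\}$ term and absorbing the constant shift $(1,\dots,1)$ into $y$ gives the claim.

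The main point requiring care is the bookkeeping of the inclusion-exclusion terms. One must verify, via \autoref{corollary:SimplexDecompositionIntersections}, that $I=\{0\}$ is the only index set whose intersection contains a scaled simplex of full dimension $m$. One must also check that every other intersection really is of the product form $Q'\times(t\Delta_{m'}+y')$ with $m'<m$, including the degenerate pieces at $t=0$, where the identity of \autoref{theorem:SimplexDeomposition} is still valid as noted after that theorem.
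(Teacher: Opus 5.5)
Your overall strategy is the paper's: inclusion--exclusion reduces to simplices, then the Canonical Simplex Decomposition trades the scaled top-dimensional piece for pieces whose scaled factor has smaller dimension, with induction on that dimension. However, the strengthened statement you induct on is too weak. It only allows translations $y\in\R^m$ of the scaled factor while $Q$ stays fixed, and this fails in two places.

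First, in the reduction the translation $L^{-1}(tc+x)$ is an arbitrary vector of $\R^n$. After reordering you must therefore treat $\{w\}\times(t\Delta_k+y)$ with $w\in\R^{n-k}$ varying, and joint smoothness in $w$ is not covered by the case $Q=\{0\}$. For $t>0$ and $w\neq 0$ this set spans a $(k+1)$-dimensional linear subspace, so $g\in\GL(n,\R)$ cannot absorb the shift.

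Second, and more seriously, the induction does not close. Write $a=\min I$ and $b=\max I$. Translating $Q\times\Delta_a\times\{(1,\dots,1)\}\times(t\Delta_{m-b}+(1,\dots,1))$ by $y\in\R^m$ turns the complementary factor into $Q\times(\Delta_a+(y_1,\dots,y_a))\times\{(1+y_{a+1},\dots,1+y_b)\}$, which moves with $y$. So your assertion that $Q'$ is a fixed polytope is false, and the induction hypothesis does not apply.

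The repair is to prove smoothness of $(t,g,z)\mapsto\mu(g[(Q\times t\Delta_m)+z])$ on $[0,\infty)\times\GL(n,\R)\times\R^n$, i.e.\ with translations in all of $\R^n$. This is exactly how the paper keeps $x\in\R^n$ in its inductive claim. With this formulation:
the base case is an affine image of $Q$;
the left-hand side is the image of the fixed polytope $Q\times\Delta_m$ under an affine map depending smoothly on $(t,g,z)$ for $t>-1$;
each term with $I\neq\{0\}$ equals $(Q'\times t\Delta_{m-b})+z'$, with $Q'=Q\times\Delta_a\times\{(1,\dots,1)\}$ genuinely fixed and $z'$ a constant shift of $z$.

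After this fix, your argument differs from the paper's only in the parametrization. The paper splits the fixed simplex as $\bigsqcup_i P_i(t,1-t)$, so its complementary factors carry the scaling $1-t$ and its inductive statement needs the extra parameter $A\in\GL(F)$. Your choice $s=1$, splitting $(1+t)\Delta_m$, keeps the complementary factors independent of $t$, which is slightly cleaner bookkeeping.

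One more small point, which the paper also passes over. At $t=0$ the identity $\bigcap_{i\in I}g[tS_i+x]=g[t(\bigcap_{i\in I}S_i)+x]$ fails when $\bigcap_{i\in I}S_i=\emptyset$. Since $\mu$ is not assumed continuous, the reduction to simplices at $t=0$ needs $\sum_{I:\,\bigcap_{i\in I}S_i\neq\emptyset}(-1)^{|I|-1}=1$, which is inclusion--exclusion for the Euler characteristic.
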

	\begin{proof}
		We will show the claim by induction on the dimension $n$, where the case $n=0$ is trivial. Thus assume that the claim holds for all such valuations on an at most $n-1$ dimensional real vector space. Given a valuation $\mu$ on $\R^n$ with these properties, the induction assumption implies that the claim holds for all polytopes of dimension at most $n-1$.\\
		
		We will now show that $\mu$ satisfies the following property:
		If $\R^n=E\oplus F$ is a direct sum decomposition, then for every $P_E\in \calP(E)$ and $P_F\in \calP(F)$, the map
		\begin{align*}
			[0,1)\times \GL(n,\R)\times \GL(F,\R)\times\R^n&\rightarrow\R\\
			(t,g,A,x)&\mapsto \mu(g[tP_E+AP_F+x]) 
		\end{align*}
		is smooth. We will use induction on $j=\dim E$, where the case $j=0$, i.e. $F=\R^n$, holds by our assumptions on $\mu$.\\
		Thus assume that the claim holds for all such direct sum decompositions $\R^n=E'\oplus F'$ with $\dim E'\le j-1$. Using the Inclusion-Exclusion Principle, it is sufficient to prove the claim when $P_E$ is a full dimensional simplex in $E$. Changing coordinates if necessary, we may assume that we are given the standard simplex $\Delta$ in $E\cong\R^j$. Using the Canonical Simplex Decomposition in \autoref{theorem:SimplexDeomposition}, we write	for $0\le t<1$,
		\begin{align*}
			\Delta= \bigsqcup_{j=0}^{j} \left[t\Delta_i \times[(1-t)\Delta_{j-i}]+t\sum_{l=i+1}^{j}e_l\right],
		\end{align*}
		where $e_i$, $1\le i\le j$, denotes the standard basis of $E\cong\R^j$. Thus, the Inclusion-Exclusion Principle from \autoref{theorem:InclusionExclusionPrinciple} implies
		\begin{align*}
			&\mu(g[\Delta+AP_F+x])\\
			=&\mu\left(g\left[t\Delta+AP_F+x\right]\right)+\sum_{l=1}^{j+1}(-1)^{l-1}\sum_{\substack{I\subset \{0,\dots,j\},|I|=l,\\ I\ne \{j\}}}\mu\left(g\left[\bigcap_{i\in I}P_i(t)+AP_F+x\right]\right)
		\end{align*}
		for $P_i(t)=t\Delta_i\times [(1-t)\Delta_{j-i}]+t\sum_{l=i+1}^je_l$. Due to \autoref{corollary:SimplexDecompositionIntersections}, we have
		\begin{align*}
			\bigcap_{i\in I}P_i(t)=t\Delta_{\min I}\times\{(0,\dots,0)\}\times [(1-t)\Delta_{j-\max I}]+t\sum_{l=\min I+1}^{j}e_l.
		\end{align*}	
		Thus 
		\begin{align*}
			&\mu\left(g\left[t\Delta+AP_F+x\right]\right)\\
			=&	\mu(g\left[\Delta+AP_F+x\right])\\
			&-\sum_{l=1}^{j+1}(-1)^{l-1}\sum_{\substack{I\subset \{0,\dots,j\},|I|=l,\\ I\ne \{j\}}}\mu\left(g\left[t\Delta_{\min I}+A_t\tilde{P}_I+t\sum_{l=\min I+1}^{j}e_l+x\right]\right)
		\end{align*}
		for the polytopes $\tilde{P}_I=\{(0,\dots,0)\}\times \Delta_{j-\max I} +P_F$ in \begin{align*}
			\tilde{F}_I:=\mathrm{span}(e_{\min I+1},\dots e_j)\oplus F,
		\end{align*} and the matrix $A_t\in\GL(\tilde{F}_I)$ acting as $(1-t)$ on the first summand and as $A$ on the second. Note that the map $[0,1)\times \GL(F)\mapsto \GL(\tilde{F}_I)$, $(t,A)\mapsto A_t$ is well defined and smooth. Since $\Delta_{\min I}$ is contained in a subspace of dimension strictly less than $j$ for every index set $I$ in the sum, we may apply the induction assumption to see that the right hand side defines a smooth function on $[0,1)\times\GL(n)\times \GL(F)\times\R^n$. Thus, the same holds for the left hand side, which completes the induction.\\
		Now note that our original claim follows from the case $j=n$.
	\end{proof}
	\begin{remark}
		Note that the previous result does not require that $\mu$ is continuous as a functional on $\calP(\R^n)$. We will nevertheless only consider continuous valuations for the remaining sections of this article, since a key result requires that the pointwise derivative of this expression defines a measurable valuation (compare \autoref{proposition:PropertiesLambdaDifferential} below).
	\end{remark}

	\subsection{Filtration}
		\label{section:AffineSmoothValuations}
		Let $\CV(\R^n)$ denote the space of all continuous valuations $\mu:\calP(\R^n)\rightarrow\R$. We denote by $\CV(\R^n)^{sm}$ the subspace of all valuations $\mu\in\CV(\R ^n)$ such that the map
		\begin{align*}
			\Aff(n,\R)&\rightarrow\R\\\
			g&\mapsto \mu(g(P))
		\end{align*}
		is smooth for every fixed $P\in \calP(\R^n)$. We will call these valuations \emph{affine smooth} for brevity.
		\begin{remark}
			Note that this is a pointwise condition. For some more advanced constructions, it might be more natural to consider the smooth vectors of the natural representation of $\Aff(n,\R)$ on $\CV(\R^n)$ equipped with the compact-open topology instead (which would be more closely related to the notion used in \cite{AleskerTheoryvaluationsmanifolds.2006}), however, since the pointwise differentiability is sufficient for our purposes, we will use this weaker notion. 
		\end{remark}
		Similar to \cite{AleskerTheoryvaluationsmanifolds.2006}, we may consider the following subspaces, which are well-defined due to \autoref{theorem:DifferentiabilityAffineSmooth}.
		\begin{definition}
			For positive $k\in\mathbb{N}$ we define
			\begin{align*}
				&W_k(\R^n):=\\
				&\left\{\mu\in \CV(\R^n)^{sm}:\frac{d^{i}}{dt^{i}}\Big|_0\mu(tP+x)=0~\text{for}~P\in\calP(\R^n), x\in\R^n, 0\le i<k\right\}.
			\end{align*}
			For $k=0$, we set $W_0(\R^n)=\CV(\R^n)^{sm}$ for completeness.
		\end{definition}
		Note that this defines a decreasing filtration $W_0(\R^n)\supset W_1(\R^n)\supset...$ on $\CV(\R^n)^{sm}$. The motivation behind this construction is that it allows us to relate affine smooth valuations to translation invariant valuations.
		\begin{proposition}\label{proposition:PropertiesLambdaDifferential}
			For $\mu\in W_k(\R^n)$ and $x\in\R^n$, the map $D_x^k\mu:\calP(\R^n)\rightarrow\R$ defined by
			\begin{align*}
				D_x^k\mu(P):=\frac{d^k}{dt^k}\Big|_0\mu(tP+x)\quad\text{for}~P\in\calP(\R^n)
			\end{align*}
			is a translation invariant and measurable valuation that is homogeneous of degree $k$.
		\end{proposition}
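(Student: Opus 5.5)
Four properties of $D^k_x\mu$ have to be checked: it is well defined, it is a valuation, it is measurable, and it is translation invariant and $k$-homogeneous. Well-definedness comes from \autoref{theorem:DifferentiabilityAffineSmooth}: for fixed $P$ and $x$ the map $t\mapsto\mu(tP+x)$ is smooth on $[0,\infty)$, so the one-sided $k$-th derivative at $t=0$ exists.

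\textbf{Valuation property.} Let $P,Q,P\cup Q\in\calP(\R^n)$. For $t>0$ the map $y\mapsto ty+x$ is a bijection, so
\begin{align*}
(tP+x)\cup(tQ+x)&=t(P\cup Q)+x,\\
(tP+x)\cap(tQ+x)&=t(P\cap Q)+x.
\end{align*}
Hence
\[
\mu(tP+x)+\mu(tQ+x)=\mu(t(P\cup Q)+x)+\mu(t(P\cap Q)+x)\quad\text{for } t>0.
\]
All four terms are smooth on $[0,\infty)$. Differentiating $k$ times and letting $t\to0^+$ gives the identity for $D^k_x\mu$, since the one-sided derivatives at $0$ are limits of derivatives at $t>0$.

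\textbf{Homogeneity.} For $\lambda>0$ the chain rule gives $D^k_x\mu(\lambda P)=\lambda^kD^k_x\mu(P)$. For $\lambda=0$, $k\ge1$, the map $t\mapsto\mu(\{x\})$ is constant, so $D^k_x\mu(\{0\})=0$. For $k=0$ the claim is trivial.

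\textbf{Translation invariance.} This is where the hypothesis $\mu\in W_k$ enters. For $y\in\R^n$ we have $t(P+y)+x=tP+(x+ty)$. Consider the smooth function
\[
h(t,s)=\mu(tP+x+sy)
\]
on $[0,\infty)\times\R$, which is smooth by \autoref{theorem:DifferentiabilityAffineSmooth} with $g=\mathrm{id}$. Set $\phi(t)=h(t,t)$. The chain rule for $\frac{d^k}{dt^k}\big|_0\phi$ expands into a sum of mixed derivatives $\partial_t^a\partial_s^bh(0,0)$ with $a+b=k$.

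For each fixed $s$, the hypothesis $\mu\in W_k$ applied at the point $x+sy$ gives $\partial_t^ah(0,s)=0$ for all $a<k$. Differentiating this identity in $s$ (justified by joint smoothness) gives $\partial_s^b\partial_t^ah(0,s)=0$ for $a<k$. The only surviving term is therefore $\partial_t^kh(0,0)$, so
\[
D^k_x\mu(P+y)=D^k_x\mu(P).
\]
The main care point is that smoothness up to the boundary $t=0$ allows the mixed partials to be interchanged. This is exactly what the joint smoothness statement in \autoref{theorem:DifferentiabilityAffineSmooth} provides.

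\textbf{Measurability.} Since $\mu$ is continuous on $\calP(\R^n)$, each map $P\mapsto\mu(tP+x)$ is continuous. Write the derivative as an iterated limit of finite difference quotients,
\[
D^k_x\mu(P)=\lim_{m\to\infty}m^k\sum_{i=0}^k(-1)^{k-i}\binom{k}{i}\mu\Bigl(\tfrac{i}{m}P+x\Bigr).
\]
The forward $k$-th difference quotient converges to the one-sided derivative because $t\mapsto\mu(tP+x)$ is $C^k$ on $[0,\infty)$. This writes $D^k_x\mu$ as a pointwise limit of continuous functions on $\calP(\R^n)$, so it is Borel measurable, though not necessarily continuous.
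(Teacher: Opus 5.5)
Your proof is correct and follows the paper's argument. Translation invariance and homogeneity come from the chain rule applied to $\mu(tP+(x+sy))|_{s=t}$, and you usefully make explicit how the vanishing of the lower $t$-derivatives in $W_k$ kills the mixed terms. Measurability comes from writing $D^k_x\mu$ as a pointwise limit of continuous functions. The only cosmetic difference is that the paper uses $D^k_x\mu(P)=k!\lim_{t\to0^+}\mu(tP+x)/t^k$ (valid since $\mu\in W_k$) to get measurability and the valuation property at once, whereas you use forward $k$-th differences and differentiate the valuation identity directly.
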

		\begin{proof}
			This is clear for $k=0$, since $D_x^0\mu(P)=\mu(x)$. Thus let $k>0$.
			First note that since all lower order derivatives vanish, $D_x^k\mu:\calP(\R^n)\rightarrow\R$ may also be calculated by
			\begin{align*}
				D_x^k\mu(P)=k!\lim_{t\rightarrow 0^+}\frac{\mu(tP+x)}{t^k}.
			\end{align*}
			In particular, it is measurable as the pointwise limit of a sequence of continuous valuations. Obviously, this limit has the valuation property. In order to see that it is translation invariant, note that
			\begin{align*}
				\mu(t(P+y)+x)=\mu(tP+(x+sy))|_{s=t},
			\end{align*}
			so a repeated application of the chain rule implies the result. Similarly, the fact that $D_x^k\mu$ is $k$-homogeneous is a direct consequence of the chain rule.
		\end{proof}
	\begin{remark}\label{remark:VanishingDerivativeFiltration}
		Note that, by definition, $\mu\in W_k(\R^n)$ belongs to $W_{k+1}(\R^n)$ if and only if $D^k_x\mu=0$ for all $x\in\R^n$.
	\end{remark}
		
	The proofs of the following two results are essentially identical to the proofs of the corresponding results in \cite{AleskerTheoryvaluationsmanifolds.2006}, however, since they require some minor but essential modifications due to our weaker regularity assumptions, we include the complete arguments. 
	\begin{proposition}\label{proposition:FiltrationTerminates}
		$W_{n+1}(\R^n)=0$.
	\end{proposition}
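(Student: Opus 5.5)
Let $\mu\in W_{n+1}(\R^n)$. The goal is to show $\mu(P)=0$ for every $P\in\calP(\R^n)$. By the Inclusion-Exclusion Principle (\autoref{theorem:InclusionExclusionPrinciple}) and triangulation, it suffices to treat simplices of every dimension $0\le m\le n$. I would argue by induction on $m=\dim P$.

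For $m=0$, $P=\{x\}$ and $\mu(\{x\})=\mu(0\cdot P+x)$, which is the zeroth derivative at $t=0$ and hence vanishes. For the induction step, take an $m$-dimensional simplex $P$, translated so that one vertex is $x$. Write $P=x+\Delta$ with $\Delta$ a simplex having vertex $0$, and consider $f(t)=\mu(t\Delta+x)$ for $t\in[0,1]$. By \autoref{theorem:DifferentiabilityAffineSmooth}, $f$ is smooth on $[0,\infty)$, and $f^{(i)}(0)=0$ for $i\le n$.

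The key step is to show $f^{(n+1)}\equiv 0$, or more directly that $f$ is a polynomial of degree at most $n$ in $t$. For this I would use the Canonical Simplex Decomposition (\autoref{theorem:SimplexDeomposition}) with $s,t\ge0$, after an affine change of coordinates mapping $\Delta$ to the standard simplex $\Delta_m$. This gives
\[
(s+t)\Delta_m=\bigsqcup_{j=0}^m s\Delta_j\times\bigl(t\Delta_{m-j}+(s,\dots,s)\bigr),
\]
and Inclusion-Exclusion together with \autoref{corollary:SimplexDecompositionIntersections} expresses $\mu(x+(s+t)\Delta)$ through $\mu(x+s\Delta)$, $\mu(x+(s,\dots,s)+t\Delta')$, and terms involving lower-dimensional pieces. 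The lower-dimensional pieces vanish by the induction hypothesis.

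More concretely, differentiating at $s=0$ moves the base point: $\frac{d}{ds}\big|_{s=0^+}$ of $\mu(x+(s+t)\Delta)$ equals $f'(t)$. On the decomposition side, this produces derivatives of the form $\frac{d}{ds}\mu(y_s+t\Delta_m)$ with $y_s$ moving along a fixed direction, together with terms containing a factor $s\Delta_j$ for $j\ge1$. The second kind are handled by the induction hypothesis in lower dimension combined with the smoothness from \autoref{theorem:DifferentiabilityAffineSmooth}.

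This gives a way to express $f'(t)$ in terms of quantities of the same type evaluated at other base points. Iterating $n+1$ times and setting $t=0$ uses $D^i_y\mu=0$ for $i\le n$ at every point $y$. Since $\mu\in W_{n+1}(\R^n)$ holds at all base points and all polytopes, I expect to obtain $f^{(n+1)}(t)=0$ for all $t$. Then $f$ is a polynomial of degree at most $n$ whose derivatives of order at most $n$ vanish at $0$, so $f\equiv0$, and therefore $\mu(P)=f(1)=0$.

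The main obstacle is making this differentiation legitimate, since we only have pointwise smoothness, not uniform control. I would first try Alesker's cleaner route, adapted as follows. Define $g(t)=\mu(tP+x)$, which is smooth by \autoref{theorem:DifferentiabilityAffineSmooth}. For fixed $t_0>0$, rewrite $\mu(tP+x)$ near $t_0$ as $\mu(t_0P'+x')$ by using the affine equivariance $g[tP+x]$ of that theorem. The vanishing of all derivatives up to order $n$ at every base point, along with smoothness in $(t,g,x)$ jointly, should then force the Taylor expansion around $t=0$ to have vanishing coefficients up to order $n$ uniformly on compact sets. This yields the bound $|\mu(tP+x)|\le Ct^{n+1}$, locally uniformly in $x$.

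With that bound, I would subdivide $P$ into $N^m$ translates and reflections of $\frac1N P$, using the Canonical Simplex Decomposition repeatedly and Inclusion-Exclusion. The lower-dimensional intersections are controlled by the same bound and by induction. This gives $|\mu(P)|\le C N^m N^{-(n+1)}\cdot(\text{combinatorial factor})\to0$ as $N\to\infty$, because $m\le n$. Controlling the number of lower-dimensional intersection terms, which grow like $N^{m}$ times a constant, is the delicate point, and it is why I would keep the estimate uniform over all faces.
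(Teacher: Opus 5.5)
Your second route is essentially the paper's proof (itself adapted from Alesker): joint smoothness of $(t,x)\mapsto\mu(tQ+x)$ from \autoref{theorem:DifferentiabilityAffineSmooth} plus Taylor's formula gives $|\mu(tQ+x)|\le C_Q t^{n+1}$ for $t\in[0,1]$, $|x|\le R$ directly (the detour through $t_0>0$ is unnecessary), the induction makes $\mu$ vanish on lower-dimensional polytopes so the intersection terms drop out exactly rather than needing to be counted, and cutting the simplex into $O(N^m)$ small pieces yields $|\mu(P)|=O(N^{m-n-1})\to0$; the first, polynomial-in-$t$ route should be dropped, because the $s$-derivatives at $s=0$ of $\mu$ on the prism pieces $x+s\Delta_j\times(t\Delta_{m-j}+(s,\dots,s))$, $j\ge1$, are not controlled by the induction hypothesis. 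One correction: for $m\ge3$ the pieces are in general not translates or reflections of $\frac1N P$ but translates of $\frac1N T_{j_1\dots j_{l-1}}$, a fixed finite family of products of simplices produced by the repeated Canonical Simplex Decomposition, and this suffices because the Taylor bound holds for each of these finitely many shapes.
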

	\begin{proof}
		The argument is taken almost verbatim from \cite{AleskerTheoryvaluationsmanifolds.2006}*{Proposition 3.1.1}. We will use induction on the dimension, where the case $n=0$ is trivial. Thus assume that the claim holds for all real vector spaces of dimension at most $n-1$.\\
		Given $\mu\in W_{n+1}(\R^n)$, the induction assumption thus implies that $\mu$ is a simple valuation. Since we may triangulate any given polytope, it is thus sufficient to show that $\mu(\Delta)=0$ for any simplex $\Delta$. Changing coordinates if necessary, we may assume that $\Delta=\{x\in\R^n:0\le x_1\le\dots\le x_n\le1\}$ is the standard simplex. For $0\le i\le j\le n$, consider the sets $T_{i,j}$ given by
		\begin{align*}
			\{(x_1,\dots,x_n):0\le x_i\le x_{i+1}\le\dots\le x_j\le 1~\text{and}~x_l=0~\text{for}~l<i~\text{and}~l>j\}.
		\end{align*}
		Given a sequence $0<j_1<\dots<j_{l-1}<n$, we consider
		\begin{align*}
			T_{j_1\dots j_{l-1}}:= T_{0j_0}+\dots+T_{j_{l-1}n}.
		\end{align*}
		Note that any point $x\in \Delta$ satisfies
		\begin{align}
			\label{eq:defEqMultiindex}
			0\le x_1=\dots=x_{j_1}<x_{j_1+1}=\dots=x_{j_2}<\dots<x_{j_{l-1}+1}=\dots x_{n}\le 1
		\end{align}
		for a unique such sequence $0<j_1<\dots<j_{l-1}<n$ and $1\le l\le n-1$. If we set for $N\in\mathbb{N}$ and a given sequence $0<j_1<\dots<j_{l-1}<n$
		\begin{align*}
			R_{j_1\dots j_{l-1}}(N)=\left\{z\in \frac{1}{N}\mathbb{Z}^n\cap \Delta: z~\text{satisfies}~\eqref{eq:defEqMultiindex}\right\},
		\end{align*}
		then we have the decomposition
		\begin{align*}
			\Delta=\bigsqcup_{\substack{1\le l\le n-1,\\ 0<j_1<\dots<j_{l-1}<n}}\bigsqcup_{z\in R_{j_1\dots j_{l-1}}(N) }\left(\frac{1}{N}T_{j_1\dots j_{l-1}}+z\right).
		\end{align*}
		Since $\mu$ is simple, this implies
		\begin{align}
			\label{eq:multisum}
			\mu(\Delta)=\sum_{l=1}^{n-1}\sum_{0<j_1<\dots<j_{l-1}<n}\sum_{z\in R_{j_1\dots j_{l-1}}(N)}\mu\left(\frac{1}{N}T_{j_1\dots j_{l-1}}+z\right).
		\end{align}
		Note that we are only considering translated and rescaled copies of the finite family  of polytopes $T_{j_1\dots j_{l-1}}$. Recall that the function $(t,x)\mapsto \mu(tP+x)$ is smooth on $[0,\infty)\times\R^n$ for every fixed polytope $P$ by \autoref{theorem:DifferentiabilityAffineSmooth}. Since $\mu\in W_{n+1}(\R^n)$, all derivatives in $t=0$ vanish up to order $n$. For a fixed polytope $P\in\calP(\R^n)$ and $R>0$, we may thus apply Taylor's formula with remainder to obtain for every $\epsilon>0$ an index $N(P,\epsilon,R)$ such that
		\begin{align*}
			\left|\mu\left(\frac{1}{N}P+x\right)\right|\le \epsilon N^{-n}
		\end{align*}
		for all $N\ge N(P,\epsilon,R)$ and $|x|\le R$. Now note that the sets $R_{j_1\dots j_{l-1}}(N)$ are bounded independent of $N$. Since we only have a finite family of polytopes, we thus find for $\epsilon>0$ an index $N(\epsilon)$ such that for all $1\le l\le n-1$ and sequences $0< j_1<\dots<j_{l-1}<n$,
		\begin{align*}
			\left|\mu\left(\frac{1}{N}T_{j_1\dots j_{l-1}}+z\right)\right|\le \epsilon N^{-n}
		\end{align*}
		for all $N\ge N(\epsilon)$  and $z\in R_{j_1\dots j_{l-1}}(N)$. Since $\left|\frac{1}{N}\mathbb{Z}^n\cap\Delta\right|\le N^n$, we may estimate the individual terms in the sum in Eq.~\eqref{eq:multisum} for $N\ge N(\epsilon)$, and obtain
		\begin{align*}
			|\mu(\Delta)|\le n\cdot C\cdot N^n\cdot \epsilon N^{-n}=nC\epsilon
		\end{align*}
		where $C$ is a bound on the number of sequences satisfying $0< j_1<\dots<j_{l-1}<n$. Since the constant $nC$ is independent of $\epsilon>0$, we obtain $\mu(\Delta)=0$, which shows the claim.
	\end{proof}
	
	\begin{proposition}\label{proposition:classificationWnDensity}
		If $\mu\in W_n(\R^n)$, then $\mu$ is a smooth measure.
	\end{proposition}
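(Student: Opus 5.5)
We aim to show that there is a smooth function $\phi$ on $\R^n$ with $\mu(P)=\int_P\phi(x)\,dx$ for all $P\in\calP(\R^n)$. The natural candidate comes from \autoref{proposition:PropertiesLambdaDifferential}: for $\mu\in W_n(\R^n)$ and $x\in\R^n$, the valuation $D^n_x\mu$ is translation invariant, measurable and $n$-homogeneous. \autoref{proposition:VolumeCharacterizationHomMeasurable} then gives $D^n_x\mu=n!\,\phi(x)\vol_n$ for some scalar $\phi(x)$. Fix a reference simplex $\Delta$ with $\vol_n(\Delta)=1$. Then $\phi(x)=\frac{1}{n!}\frac{d^n}{dt^n}|_0\mu(t\Delta+x)$. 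By \autoref{theorem:DifferentiabilityAffineSmooth}, $(t,x)\mapsto\mu(t\Delta+x)$ is smooth on $[0,\infty)\times\R^n$, so $\phi$ is smooth, and in particular continuous.

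Set $\nu(P):=\mu(P)-\int_P\phi\,dx$. The measure part is a smooth measure with a smooth density, so it is affine smooth. We check that it lies in $W_n(\R^n)$ with $D_x^n$ equal to $n!\phi(x)\vol_n$. Indeed, $\int_{tP+x}\phi=t^n\int_P\phi(x+ty)\,dy$, which vanishes to order $n-1$ in $t=0$ and has $n$-th derivative $n!\phi(x)\vol(P)$. Hence $\nu\in W_n(\R^n)$ with $D^n_x\nu=0$ for all $x$. By \autoref{remark:VanishingDerivativeFiltration}, $\nu\in W_{n+1}(\R^n)$, and \autoref{proposition:FiltrationTerminates} gives $\nu=0$. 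So $\mu$ is the smooth measure with density $\phi$.

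The step needing the most care is checking that the measure $P\mapsto\int_P\phi$ actually belongs to $\CV(\R^n)^{sm}$. Continuity in the Hausdorff metric follows from continuity of $\phi$ and the continuity of volume on convex bodies. For affine smoothness, write $\int_{A(P)}\phi=|\det g|\int_P\phi(gy+b)\,dy$ for $A=(g,b)$, which is smooth in $(g,b)$ since $\phi$ is smooth and $P$ is compact. Beyond this, one must also confirm that the constant in \autoref{proposition:VolumeCharacterizationHomMeasurable} is consistent with the normalization $n!$; this is a direct computation with $\Delta$. The remaining steps are formal consequences of the earlier results.
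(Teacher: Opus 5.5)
Your proposal is correct and follows the paper's proof essentially step for step. You identify $D^n_x\mu$ as a multiple of $\vol_n$ via \autoref{proposition:VolumeCharacterizationHomMeasurable}, get smoothness of the coefficient from \autoref{theorem:DifferentiabilityAffineSmooth}, subtract the smooth measure with density $\phi=c/n!$, and conclude with \autoref{proposition:FiltrationTerminates}; your explicit check that this measure is affine smooth and lies in $W_n(\R^n)$ just fills in what the paper calls ``easy to see''.
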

	\begin{proof}
		The argument is essentially identical to \cite{AleskerTheoryvaluationsmanifolds.2006}*{Proposition 3.1.2}.	Note that for any $x\in\R^n$, $D^n_x\mu:\calP(\R^n)\rightarrow\R$ defines a translation invariant valuation that is homogeneous of degree $n$ by \autoref{proposition:PropertiesLambdaDifferential}. \autoref{proposition:VolumeCharacterizationHomMeasurable} thus shows that $D^n_x\mu$ is a multiple of the Lebesgue measure $\vol_n$ on $\R^n$. Since this holds for any $x\in\R^n$, we obtain a function $c:\R^n\rightarrow\R$ such that 
		\begin{align*}
			\frac{d^n}{dt^n}\Big|_0\mu(tP+x)=D^n_x\mu(P)=c(x)\vol_n(P)\quad\text{for all}~P\in \calP(\R^n),~x\in\R^n.
		\end{align*}
		However, for a fixed polytope $P\in\calP(\R ^n)$, the left hand side is a smooth function in $x\in\R^n$ due to \autoref{theorem:DifferentiabilityAffineSmooth}. Thus $c$ is a smooth function. If we let $\tilde{\mu}$ denote the valuation given by the smooth measure on $\R^n$ with density $\frac{c}{n!}$, then it is easy to see that $\tilde{\mu}\in W_n(\R ^n)$ with
		\begin{align*}
			D^n_x\tilde{\mu}=c(x)\vol_n.
		\end{align*}
		In particular, $\mu-\tilde{\mu}\in W_{n+1}(\R^n)$. Since this space is trivial by \autoref{proposition:FiltrationTerminates}, $\mu=\tilde{\mu}$, so $\mu$ is a smooth measure.
	\end{proof}
	
	The next result may be seen as a refined version of \autoref{proposition:FiltrationTerminates}. It provides some uniform decay estimates for certain compact families of simplices.
	\begin{lemma}\label{lemma:EstimateSimplexVanishing}
		Let $\mu\in W_k(\R^n)$ and assume that $D^k_0\mu=0$. Fix a simplex $\Delta_0$ with vertex at the origin and a compact subset $A\subset \GL(n,\R)$. Consider the set $S(\Delta_0,A):=\{\Delta:\Delta=tg\Delta_0, t\in (0,1), g\in A\}$.  Then for every $\epsilon>0$ there exists $\delta>0$ such that for every simplex $\Delta\in S(\Delta_0,A)$ with $\diam\Delta<\delta$,
		\begin{align*}
			|\mu(\Delta)|\le C\epsilon \diam(\Delta)^k.
		\end{align*}
		for a constant $C>0$ depending on $\Delta_0$ and $A$ only.
	\end{lemma}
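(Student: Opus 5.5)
We use Taylor's formula with remainder at $t=0$, applied to the smooth function from \autoref{theorem:DifferentiabilityAffineSmooth}, and make it uniform over the compact parameter set $A$.

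Fix the simplex $\Delta_0$ and write every element of $S(\Delta_0,A)$ as $tg\Delta_0$ with $t\in(0,1)$ and $g\in A$. Since the vertex $0$ of $\Delta_0$ is fixed by $g$, we have $tg\Delta_0=g[t\Delta_0]$, and this is a translate-free dilation at the origin. Consider the function
\begin{align*}
	\phi(t,g):=\mu(g[t\Delta_0+0]),\qquad (t,g)\in[0,\infty)\times\GL(n,\R).
\end{align*}
By \autoref{theorem:DifferentiabilityAffineSmooth}, $\phi$ is smooth, in particular $C^k$ on $[0,1]\times A'$ for a compact neighborhood $A'$ of $A$.

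The first step is to identify the $t$-derivatives of $\phi$ at $t=0$. Since $g$ is linear, $g[t\Delta_0]=t(g\Delta_0)$. Hence
\begin{align*}
	\partial_t^i\phi(0,g)=\frac{d^i}{dt^i}\Big|_0\mu(t\,g\Delta_0+0),
\end{align*}
which vanishes for $i<k$ because $\mu\in W_k(\R^n)$. For $i=k$ it equals $D_0^k\mu(g\Delta_0)$, and this vanishes by the assumption $D^k_0\mu=0$. So all $t$-derivatives of $\phi$ at $t=0$ up to order $k$ vanish, for every $g$.

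The second step is the Taylor estimate. Taylor's formula in $t$ with Lagrange remainder of order $k$ gives
\begin{align*}
	\phi(t,g)=\frac{t^k}{k!}\,\partial_t^k\phi(\xi,g)\quad\text{for some }\xi\in(0,t),
\end{align*}
where all lower-order terms drop out. The function $\partial_t^k\phi$ is continuous on the compact set $[0,1]\times A$ and vanishes on $\{0\}\times A$. It is therefore uniformly continuous, so for $\epsilon>0$ there is $\tau>0$ with $|\partial_t^k\phi(s,g)|\le\epsilon$ for all $s<\tau$ and $g\in A$. This gives $|\mu(tg\Delta_0)|\le \epsilon t^k/k!$ for all $t<\tau$ and $g\in A$.

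The last step converts this bound from $t$ to the diameter. We have $\diam(tg\Delta_0)=t\diam(g\Delta_0)$. By compactness of $A$ and invertibility of each $g$, the quantity $\diam(g\Delta_0)$ is bounded between constants $0<c_1\le c_2$ on $A$. Set $\delta:=c_1\tau$. Then $\diam\Delta<\delta$ forces $t<\tau$, and
\begin{align*}
	|\mu(\Delta)|\le \frac{\epsilon}{k!}\,c_1^{-k}\diam(\Delta)^k,
\end{align*}
so the claim holds with $C=c_1^{-k}/k!$, which depends only on $\Delta_0$ and $A$.

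One possible obstacle is that a simplex $\Delta$ might admit several representations $tg\Delta_0$. This causes no difficulty, since the estimate holds for every representation. The main point requiring care is the uniformity in $g$. Pointwise differentiability alone would not give it; it comes from the joint smoothness in $(t,g)$ provided by \autoref{theorem:DifferentiabilityAffineSmooth}, together with compactness of $A$.
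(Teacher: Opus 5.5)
Your proposal is correct and follows essentially the same route as the paper. Both use joint smoothness of $(t,g)\mapsto\mu(tg\Delta_0)$ from \autoref{theorem:DifferentiabilityAffineSmooth}, the vanishing of the $t$-derivatives at $0$ up to order $k$, a Taylor remainder estimate made uniform via uniform continuity of $\partial_t^k$ on $[0,1]\times A$, and a lower bound on $\diam(g\Delta_0)$ over the compact set $A$ to convert $t^k$ into $\diam(\Delta)^k$. The only difference is that you use the Lagrange form of the remainder where the paper uses the integral form, which is immaterial.
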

	\begin{proof}
		Consider the function $f:[0,1)\times \GL(n,\R)\rightarrow \R$ given by
		\begin{align*}
			f(t,g)=\mu(tg\Delta_0).
		\end{align*}
		Then $f$ is smooth by \autoref{theorem:DifferentiabilityAffineSmooth} and satisfies 
		\begin{align*}
			\frac{d^i}{dt^i}\Big|_0f(t,g)=0
		\end{align*}
		for all $0\le i\le k$ by assumption. Taylor's formula with remainder implies
		\begin{align*}
			f(t,g)=\int_0^t \frac{(t-s)^{k-1}}{(k-1)!}\partial_s^kf(s,g)ds=t^k\int_0^1 \frac{(1-s)^{k-1}}{(k-1)!}\partial_s^kf(ts,g)ds.
		\end{align*}
		Since the function $[0,1]\times A\rightarrow \R$, $(t,g)\mapsto \partial_s^kf(s,g)$ is continuous and $A$ is compact, ir is uniformly continuous. Moreover, it vanishes in $s=0$ for all $g\in A$. Thus, for every $\epsilon>0$, we find $\delta>0$ such that
		\begin{align*}
			|\partial_s^kf(s,g)|<\epsilon\quad\text{for all}~(s,g)\in[0,\delta]\times A.
		\end{align*}
		In particular, for $(t,g)\in [0,\delta]\times A$, we have
		\begin{align*}
			|f(t,g)|\le t^k\int_0^1 \frac{(1-s)^{k-1}}{(k-1)!}|\partial_s^kf(ts,g)|ds\le t^k\epsilon \int_0^1\frac{(1-s)^{k-1}}{(k-1)!}ds.
		\end{align*}
		Thus, for $t\in [0,\delta]$ and $g\in A$, we have
		\begin{align*}
			|\mu(tg\Delta_0)|\le \epsilon t^k.
		\end{align*}
		Since $A$ is compact, there exists a constant $C>1$ such that 
		\begin{align*}
			\frac{1}{C}\diam (\Delta_0)\le \diam (g\Delta_0)\quad\text{for all}~g\in A,
		\end{align*} 
		In particular, if $\Delta\in S(\Delta_0,A)$ satisfies $\diam (\Delta)<\frac{1}{C}\diam (\Delta_0)\delta$,  and $t\in[0,1]$, $g\in A$ are chosen such that $\Delta=tg\Delta_0$, then
		\begin{align*}
			t=\frac{\diam(\Delta)}{\diam(g\Delta_0)}< \frac{\frac{1}{C}\diam (\Delta_0)\delta}{\frac{1}{C}\diam (\Delta_0)}=\delta.
		\end{align*}
		In this case, we therefore obtain the inequality
		\begin{align*}
			|\mu(\Delta)|=|\mu(tg\Delta_0)|\le \epsilon t^k=\epsilon \frac{\diam(\Delta)^k}{\diam(g\Delta_0)^k}\le \epsilon \frac{C^k\diam(\Delta)^k}{\diam(\Delta_0)^k}.
		\end{align*}
		The claim follows by replacing $\delta$ by $\frac{1}{C}\diam (\Delta_0)\delta$.
	\end{proof}

	\begin{corollary}\label{corollary:ReformulationEstimateSimplex}
		Let $\mu\in W_k(\R^n)$ with $D^k_0\mu=0$, and assume that $S\subset \calP(\R^n)$ is a compact family of full dimensional simplices with vertex at the origin. Then the following holds: There exists a constant $C>0$ depending on $S$ only such that for every $\epsilon>0$, there exists $\delta>0$ such that for every simplex $\Delta\in \calP(\R^n)$ with $t\Delta\in S$ for some $t>0$ and $\diam (\Delta)<\delta$, the following inequality holds:
		\begin{align*}
			|\mu(\Delta)|\le C\epsilon \diam(\Delta)^k.
		\end{align*}
	\end{corollary}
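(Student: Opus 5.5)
The plan is to reduce the corollary to \autoref{lemma:EstimateSimplexVanishing} by covering $S$ with finitely many families of the form $S(\Delta_0,A)$. The reduction rests on a parametrization. Fix a reference simplex $\Delta_{\mathrm{ref}}=\mathrm{conv}(0,e_1,\dots,e_n)$. Every full dimensional simplex with a vertex at the origin can be written as $g\Delta_{\mathrm{ref}}$ for some $g\in\GL(n,\R)$, whose columns are the nonzero vertices in some order. The map $\GL(n,\R)\rightarrow\calP(\R^n)$, $g\mapsto g\Delta_{\mathrm{ref}}$, is continuous. It is also locally invertible up to the finite ambiguity of permuting columns. This ambiguity is harmless, since it is absorbed by composing with permutation matrices.

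The first step is to find a compact $A\subset\GL(n,\R)$ with $S\subset\{g\Delta_{\mathrm{ref}}:g\in A\}$. Since $S$ is compact in the Hausdorff metric and consists of full dimensional simplices with vertex $0$, the vertex sets of members of $S$ depend continuously on the simplex. One way to see this is \autoref{corollary:ConvergenceSimplices} together with the fact that extreme points of a Hausdorff-convergent sequence of simplices converge to the vertices of a nondegenerate limit. Hence the set of matrices whose columns list the nonzero vertices of some $\Delta\in S$, in any order, is closed and bounded. The determinants of these matrices are bounded away from $0$, because a limit with vanishing determinant would be a lower dimensional member of $S$, contradicting full dimensionality. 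So this set $A$ is a compact subset of $\GL(n,\R)$. This is the only step that needs care: we must make sure vertex extraction is continuous on $S$ and that no degeneration occurs, and compactness of $S$ inside the full dimensional simplices settles both.

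The second step handles the scaling. If $\Delta$ satisfies $t\Delta\in S$ for some $t>0$, then $\Delta=t^{-1}g\Delta_{\mathrm{ref}}$ with $g\in A$. Because $A$ is compact, $\diam(g\Delta_{\mathrm{ref}})$ is bounded below by some $m>0$ uniformly on $A$. So if $\diam\Delta<\delta\le m$, then $s:=t^{-1}<1$, and $\Delta=sg\Delta_{\mathrm{ref}}\in S(\Delta_{\mathrm{ref}},A)$.

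Finally, apply \autoref{lemma:EstimateSimplexVanishing} with $\Delta_0=\Delta_{\mathrm{ref}}$ and this $A$. It provides a constant $C$ depending only on $\Delta_{\mathrm{ref}}$ and $A$, hence only on $S$. It also provides, for each $\epsilon>0$, a $\delta'>0$ such that $|\mu(\Delta)|\le C\epsilon\diam(\Delta)^k$ for all $\Delta\in S(\Delta_{\mathrm{ref}},A)$ with $\diam\Delta<\delta'$. Taking $\delta=\min(\delta',m)$ gives the claim.
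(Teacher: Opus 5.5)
Your proposal is correct and follows essentially the paper's route: write every member of $S$ as $g\Delta_0$ for a fixed reference simplex $\Delta_0$ and $g$ ranging over a compact $A\subset\GL(n,\R)$, then invoke \autoref{lemma:EstimateSimplexVanishing}. The differences are cosmetic. The paper first rescales $S$ to unit volume, works in $\SL(n,\R)$ with a reference simplex taken from the normalized family, and gets compactness of $A$ from an $n!$-fold covering. You instead use the standard simplex in $\GL(n,\R)$ directly, and you also make explicit a step the paper leaves implicit: small diameter forces the scaling factor below $1$, so that $\Delta$ really lies in $S(\Delta_0,A)$.
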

	\begin{proof}
		Since $S$ is compact and every simplex is full dimensional, $\tilde{S}:=\{\vol(\Delta)^{-\frac{1}{n}}\Delta:\Delta\in S\}$ is well defined and compact as well, and every element has a vertex at the origin. If we fix an element $\Delta_0\in \tilde{S}$, then every element in $\tilde{S}$ is given by $g\Delta_0$ for some $g\in \SL(n,\R)$. The set $A:=\{g\in \SL(n,\R):g\Delta_0\in \tilde{S} \}$ is compact since $g$ is determined by the images of the vertices different from $0$ up to a permutation, so the map $A\rightarrow \tilde{S}$, $g\mapsto g\Delta_0$ defines a continuous $n!$-fold cover. Thus the claim follows by applying \autoref{lemma:EstimateSimplexVanishing} to the compact set $A\subset\SL(n,\R)$ and the simplex $\Delta_0$.
	\end{proof}

\section{Isometry invariant valuations on the sphere}
	\subsection{$\GL(n+1,\R)$-smooth valuations on the sphere}
		\label{section:glSmooth}
		Let $\CV(\S^n)$ denote the space of all continuous valuations $\mu:\calP(\S^n)\rightarrow\R$. Similar to the notion of affine smooth valuations on $\calP(\R ^n)$, let us call $\mu\in \CV(\S^n)$ a $\GL(n+1,\R)$-smooth valuation if the map
		\begin{align*}
			\GL(n+1,\R)&\rightarrow \R\\
			g&\mapsto \mu(g(P))
		\end{align*}
		is smooth for every $P\in\calP(\S^n)$, where $g(P)$ denotes the action of $\GL(n+1,\R)$ on $\calP(\S^n)$ from \autoref{section:sphericalPolytopes}. Note that this is again a pointwise property. Let us denote the subspace of $\GL(n+1,\R)$-smooth valuations by $\CV(\S^n)^{sm}$.
		
		\begin{remark}
			It is not difficult to check that the spherical intrinsic volumes are $\GL(n+1,\R)$-smooth (since they are smooth valuations in the sense of Alesker, compare \cite{AleskerTheoryvaluationsmanifolds.2006}).
		\end{remark}
		
		We require the following approximation result.
		
		\begin{corollary}\label{corollary:DensitySmoothInvariant}
			Let $\CV(\S^n)^{\SO(n+1)}$ denote the subspace of $\SO(n+1)$-invariant valuations. Then every $\mu\in \CV(\S^n)^{\SO(n+1)}$ is the pointwise limit of a sequence in $\CV(\S^n)^{\SO(n+1)}\cap \CV(\S^n)^{sm}$. Moreover, if $\mu$ is simple, then the sequence can be chosen to consists of simple valuations.
		\end{corollary}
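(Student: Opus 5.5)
The plan is a convolution argument over $\GL(n+1,\R)$. Fix a sequence of nonnegative smooth functions $\phi_j\in C_c^\infty(\GL(n+1,\R))$ with $\int\phi_j\,dg=1$ (Haar measure), with supports shrinking to the identity. In addition, each $\phi_j$ is invariant under conjugation by $\SO(n+1)$, i.e. $\phi_j(VgV^{-1})=\phi_j(g)$ for $V\in\SO(n+1)$. Such functions exist: take any bump $\psi_j$ with shrinking support near the identity and average it over conjugation, $\phi_j(g)=\int_{\SO(n+1)}\psi_j(VgV^{-1})\,dV$. This preserves smoothness, nonnegativity, compactness of support, and the integral, because Haar measure on $\GL(n+1,\R)$ is unimodular and hence conjugation invariant. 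The support stays in a small neighborhood of the identity, since conjugation by $\SO(n+1)$ preserves operator-norm distance to $\mathrm{Id}$. Then define
\begin{align*}
\mu_j(P):=\int_{\GL(n+1,\R)}\phi_j(g)\,\mu(g^{-1}(P))\,dg,\qquad P\in\calP(\S^n).
\end{align*}

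Next I check the required properties of $\mu_j$. It is well defined: $g\mapsto\mu(g^{-1}(P))$ is continuous by \autoref{lemma:groupActionSphericalPolytopesWellDefined} and the continuity of $\mu$, and $\phi_j$ has compact support. It is a valuation because each $g$ acts as a bijection of $\S^n$ preserving $\calP(\S^n)$, so it commutes with unions and intersections. Continuity of $\mu_j$ on $\calP(\S^n)$ follows from joint continuity of $(g,P)\mapsto\mu(g^{-1}(P))$ together with dominated convergence on the compact support. Joint continuity holds because the action map $\GL(n+1,\R)\times\calP(\S^n)\to\calP(\S^n)$ is jointly continuous; this is the cone picture, since $g$ acts uniformly continuously on $\S^n$ for $g$ in compact sets. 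Simplicity is inherited: $g$ maps lower dimensional spherical polytopes to lower dimensional ones, since it comes from a linear isomorphism on cones.

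For $\SO(n+1)$-invariance, substitute $g\mapsto VgV^{-1}$ and use the conjugation invariance of $\phi_j$ and of Haar measure:
\begin{align*}
\mu_j(VP)=\int\phi_j(g)\mu(V\,V^{-1}g^{-1}V(P))\,dg=\int\phi_j(g)\mu(V^{-1}g^{-1}V(P))\,dg=\mu_j(P),
\end{align*}
where the middle step uses the invariance of $\mu$ under $V$.

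For $\GL(n+1,\R)$-smoothness, write
\begin{align*}
\mu_j(h(P))=\int\phi_j(g)\mu(g^{-1}h(P))\,dg=\int\phi_j(hg')\mu(g'^{-1}(P))\,dg',
\end{align*}
using the substitution $g=hg'$ and left invariance of Haar measure. Smoothness in $h$ then follows by differentiating under the integral sign. This is justified because the derivatives of $h\mapsto\phi_j(hg')$ are continuous and, for $h$ in a compact set, supported in a compact set of $g'$, while $g'\mapsto\mu(g'^{-1}(P))$ is continuous and hence bounded there.

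Finally, pointwise convergence $\mu_j(P)\to\mu(P)$ follows from continuity of $g\mapsto\mu(g^{-1}(P))$ at the identity, together with the facts that $\phi_j\ge0$ has integral one and the supports shrink to the identity. The only step I expect to need care is the construction of the conjugation invariant kernels and the joint continuity of the action. The key input there is unimodularity of $\GL(n+1,\R)$, which is what makes the averaged kernels again have integral one.
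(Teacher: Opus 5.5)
Your proof is correct. It shares the paper's convolution framework: smooth compactly supported kernels on $\GL(n+1,\R)$, smoothness via the substitution $g=hg'$ and left invariance, and simplicity inherited from the integrand. It differs in how you obtain $\SO(n+1)$-invariance.

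The paper convolves with an arbitrary approximate identity $\phi_j$ and then averages the result over $\SO(n+1)$. This amounts to using the kernel $\tilde{\phi}_j(g)=\int_{\SO(n+1)}\phi_j(h^{-1}g)\,dh$, which concentrates near the whole subgroup $\SO(n+1)$ rather than near the identity. As a result $\tilde{\mu}_j$ is invariant for every $\mu$, and the invariance of $\mu$ is only used to see that $\tilde{\mu}_j(P)\to\mu(P)$.

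You instead average the kernel under conjugation, so your $\phi_j$ remain a genuine approximate identity and convergence is the standard argument. The invariance of $\mu$ is then used to show that $\mu_j$ is invariant, because convolution with a conjugation invariant kernel commutes with the $\SO(n+1)$-action.

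The cost of your route is that you need conjugation invariance of Haar measure (unimodularity, or just triviality of the modular function on the compact group $\SO(n+1)$), whereas the paper uses only left invariance. In return you get an $\SO(n+1)$-equivariant smoothing operator that approximates every $\mu\in\CV(\S^n)$. You also supply the joint-continuity and dominated-convergence details that the paper leaves as ``easy to check''.
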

		\begin{proof}
			Choose a smooth approximation of the identity $\phi_j\in C^\infty_c(\GL(n+1,\R))$. Given $\mu\in \CV(\S^n)$, it is easy to check that
			\begin{align*}
				\mu_j(P):=\int_{\GL(n+1,\R)} \phi_j(g)\mu(g^{-1}(P))dg
			\end{align*}
			defines an element in $\CV(\S^n)^{sm}$ and that we have $\lim_{j\rightarrow\infty}\mu_j(P)=\mu(P)$ pointwise (in fact, locally uniformly, but we will not need this stronger property). If $\mu$ is in addition $\SO(n+1)$-invariant, then we may use the left invariance of the Haar measure on $\GL(n+1,\R)$ and define
			\begin{align*}
				\tilde{\mu}_j(P):=&\int_{\SO(n+1)}\mu_j(h^{-1}P)dh\\
				=&\int_{\GL(n+1,\R)} \left[\int_{\SO(n+1)}\phi_j(h^{-1}g)dh\right]\mu(g^{-1}(P))dg\\
				=&\int_{\GL(n+1,\R)} \tilde{\phi}_j(g)\mu(g^{-1}(P))dg
			\end{align*}
			for a function $\tilde{\phi}_j\in C^\infty_c(\GL(n+1,\R))$. As in the case of $\mu_j$, one easily checks that $\tilde{\mu}_j\in \CV(\S^n)^{sm}$. Moreover, $\tilde{\mu}_j$ is obviously $\SO(n+1)$-invariant. Since $\mu$ is $\SO(n+1))$-invariant, one easily checks that the sequence $(\tilde{\mu}_j(P))_j$ converges to $\mu(P)$ for every $P\in\calP(\S^n)$. This shows the first claim. Note that the sequence constructed this way consists of simple valuations if $\mu$ is simple, since  in this case the integrands vanish identically for lower dimensional polytopes.
		\end{proof}

	\subsection{Relation to affine smooth valuations}

		For $u\in \S^n$, consider the gnomonic projection $G_{u}:\S^n_+(u)\rightarrow u^\perp$. Given $\mu\in \CV(\S^n)$, we define $G_{u*}\mu: \calP(u^\perp)\rightarrow\R$ by 
		\begin{align*}
			[G_{u*}\mu](P)=\mu(G_{u}^{-1}(P))\quad\text{for}~P\in\calP(u^\perp).
		\end{align*}
		Note that this is well-defined due to \autoref{lemma:gnomonicHomeomorphismPolytopes}.
		\begin{lemma}
			Let $u\in\S^n$. If $\mu\in\CV(\S^n)$, then $G_{u*}\mu\in\CV(\R^n)$. If $\mu$ is in addition $\GL(n+1,\R)$-smooth, then $G_{u*}\mu$ is affine smooth.
		\end{lemma}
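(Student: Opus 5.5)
The plan is to verify the three required properties of $G_{u*}\mu$ in turn: it is a valuation, it is continuous, and (under the extra hypothesis) it is affine smooth. Each reduces directly to a result already established for the gnomonic projection.

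\emph{Valuation property.} Since $G_u$ is a bijection between $\S^n_+(u)$ and $u^\perp$, the inverse image $G_u^{-1}$ commutes with unions and intersections. If $P,Q,P\cup Q,P\cap Q\in\calP(u^\perp)$, then by \autoref{lemma:gnomonicHomeomorphismPolytopes} the sets $G_u^{-1}(P)$, $G_u^{-1}(Q)$, $G_u^{-1}(P\cup Q)=G_u^{-1}(P)\cup G_u^{-1}(Q)$ and $G_u^{-1}(P\cap Q)=G_u^{-1}(P)\cap G_u^{-1}(Q)$ all lie in $\calP(\S^n_+(u))\subset\calP(\S^n)$. The valuation property of $\mu$ therefore transfers verbatim. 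The empty intersection, if it arises, is handled by the convention $\mu(\emptyset)=0$ on both sides.

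\emph{Continuity.} $G_{u*}\mu$ is the composition of $\mu$ with the map $\calP(u^\perp)\to\calP(\S^n)$, $P\mapsto G_u^{-1}(P)$. This map is continuous because it is the inverse of the homeomorphism in \autoref{lemma:gnomonicHomeomorphismPolytopes}, followed by the inclusion $\calP(\S^n_+(u))\subset\calP(\S^n)$. Hence $G_{u*}\mu\in\CV(u^\perp)\cong\CV(\R^n)$.

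\emph{Affine smoothness.} Write an element of $\Aff(u^\perp)$ as $x\mapsto Ax+x_0$ with $A\in\GL(u^\perp)$ and $x_0\in u^\perp$. By \autoref{lemma:EquivarianceAction},
\begin{align*}
	G_u^{-1}(AP+x_0)=g^{u,x_0}_A\big(G_u^{-1}(P)\big),
\end{align*}
where the action on the right is the one on $\calP(\S^n)$ from \autoref{lemma:groupActionSphericalPolytopesWellDefined}. Consequently
\begin{align*}
	[G_{u*}\mu](AP+x_0)=\mu\big(g^{u,x_0}_A(G_u^{-1}(P))\big).
\end{align*}
The map $(A,x_0)\mapsto g^{u,x_0}_A$ from $\Aff(u^\perp)$ to $\GL(n+1,\R)$ is linear in the matrix entries, so it is smooth; in fact it is a group homomorphism, an embedding as the stabilizer of the affine hyperplane $u+u^\perp$. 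For fixed $P$, the function $(A,x_0)\mapsto[G_{u*}\mu](AP+x_0)$ is thus the composition of this smooth map with $g\mapsto\mu(g(G_u^{-1}(P)))$, which is smooth by the $\GL(n+1,\R)$-smoothness of $\mu$. Hence $G_{u*}\mu$ is affine smooth.

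The only step needing care is the identity in \autoref{lemma:EquivarianceAction}. It is a pointwise statement about vectors, and it must be promoted to an identity of polytopes. This works because $g^{u,x_0}_A$ maps the cone $\hat{P}$ to a cone meeting $u+u^\perp$ exactly in $AP+x_0$, so $g^{u,x_0}_A(G_u^{-1}(P))$ again lies in $\S^n_+(u)$. Once that identity of polytopes is in place, the rest is formal.
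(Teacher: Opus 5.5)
Your proposal is correct and follows the paper's own, very terse, argument: the valuation property from bijectivity of $G_u$, continuity from \autoref{lemma:gnomonicHomeomorphismPolytopes}, and affine smoothness from \autoref{lemma:EquivarianceAction} composed with the smooth homomorphism $(A,x_0)\mapsto g^{u,x_0}_A$. You simply make explicit details the paper leaves to the reader, and your cone argument in the last paragraph is not needed, since the set-level identity follows at once by applying the pointwise identity to each $x\in P$.
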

		\begin{proof}
			Note that $G_{u*}\mu$ is clearly a valuation, so the first claim follows from \autoref{lemma:gnomonicHomeomorphismPolytopes}. The second claim follows directly from \autoref{lemma:EquivarianceAction}.
		\end{proof}
		
		For $x\in \R^n$, \autoref{lemma:EquivarianceAction} implies for $P\in\calP(u^\perp)$,
		\begin{align}
			\label{eq:relationEquivariance}
			[G_{u*}\mu](tP+x)=\mu(g^{u,x}_t(G_u^{-1}(P))).
		\end{align}
		The next lemma is the key result enabling the proofs in the following sections. It relies on the uniform differentiability on compact families of simplices from \autoref{corollary:ReformulationEstimateSimplex} and the convergence of the rescaled simplices from \autoref{lemma:LimitCoordinateChangeSimplex}.
		
		\begin{lemma}\label{lemma:KeyLemmaSimpleInvariant}
			Let $\mu\in \CV(\S^n)^{sm}$ be $\SO(n+1)$-invariant and simple and assume that $G_{u*}\mu\in W_k(u^\perp)$ for all $u\in \S^n$. If $D^k_0(G_{u_0*}\mu)=0$ for some $u_0\in \S^n$, then $G_{u*}\mu\in W_{k+1}(u^\perp)$ for every $u\in\S^n$.
		\end{lemma}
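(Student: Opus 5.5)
The plan is to verify the criterion in \autoref{remark:VanishingDerivativeFiltration}. Since $G_{u*}\mu\in W_k(u^\perp)$ by assumption, it suffices to show $D^k_x(G_{u*}\mu)=0$ for every $u\in\S^n$ and every $x\in u^\perp$. I would do this in two steps: first use the rotation invariance to get the vanishing at the origin of every chart, then move from the origin to a general point $x$ by changing charts.

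\textbf{Step 1: vanishing at the origin of every chart.} For $V\in\SO(n+1)$ we have $G_{Vu}\circ V=V\circ G_u$ on $\S^n_+(u)$. Combined with $\SO(n+1)$-invariance of $\mu$ this gives
\begin{align*}
[G_{Vu*}\mu](VP)=[G_{u*}\mu](P)\quad\text{for}~P\in\calP(u^\perp).
\end{align*}
Hence $D^k_0(G_{Vu*}\mu)(VP)=D^k_0(G_{u*}\mu)(P)$, using that $V$ is linear and fixes $0$. Since $\SO(n+1)$ acts transitively on $\S^n$, the hypothesis $D^k_0(G_{u_0*}\mu)=0$ yields $D^k_0(G_{v*}\mu)=0$ for every $v\in\S^n$.

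\textbf{Step 2: reduction to simplices with a vertex at the origin.} Fix $u\in\S^n$ and $x\in u^\perp$, and set $v:=G_u^{-1}(x)\in\S^n_+(u)$, so that $G_u(v)=x$. Because $\mu$ is simple, $G_{u*}\mu$ is simple. Consequently $D^k_x(G_{u*}\mu)$ vanishes on lower dimensional polytopes, since $tP+x$ is then lower dimensional for every $t$. By \autoref{proposition:PropertiesLambdaDifferential}, $D^k_x(G_{u*}\mu)$ is a translation invariant valuation. Triangulating an arbitrary polytope and using simplicity together with translation invariance, it therefore suffices to show $D^k_x(G_{u*}\mu)(\Delta')=0$ for every full dimensional simplex $\Delta'\subset u^\perp$ with a vertex at $0$.

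\textbf{Step 3: change of chart for such a simplex.} Put $\Delta_0:=G_u^{-1}(\Delta')$, a full dimensional spherical simplex with vertex $u$. Since lower order derivatives vanish, Eq.~\eqref{eq:relationEquivariance} gives
\begin{align*}
D^k_x(G_{u*}\mu)(\Delta')=k!\lim_{t\to0^+}t^{-k}\mu\bigl(g^{u,x}_t(\Delta_0)\bigr)=k!\lim_{t\to0^+}t^{-k}[G_{v*}\mu](t\Delta_t),
\end{align*}
where $\Delta_t=\frac1t G_v\bigl(g_t^{u,G_u(v)}(\Delta_0)\bigr)$ is exactly the family in \autoref{lemma:LimitCoordinateChangeSimplex} (with $u_0$ replaced by $u$). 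That lemma says the $\Delta_t$ are simplices in $v^\perp$ with vertex $0$ and converge to a full dimensional simplex $\Delta$ with vertex $0$. Hence
\begin{align*}
S:=\{\Delta_t: 0<t\le t_1\}\cup\{\Delta\}
\end{align*}
is a compact family of full dimensional simplices with vertex at the origin, for $t_1>0$ small. We have $G_{v*}\mu\in W_k(v^\perp)$ and, by Step 1, $D^k_0(G_{v*}\mu)=0$. So \autoref{corollary:ReformulationEstimateSimplex} applies to the simplices $t\Delta_t$, whose rescalings lie in $S$: for every $\epsilon>0$ and all sufficiently small $t$,
\begin{align*}
|[G_{v*}\mu](t\Delta_t)|\le C\epsilon\,\diam(t\Delta_t)^k\le C\epsilon\, D^k t^k,
\end{align*}
where $D$ bounds the diameters in $S$. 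Dividing by $t^k$ and letting $\epsilon\to0$ shows the limit above vanishes, which completes the proof.

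The main obstacle is the change of chart in Step 3. There is no chain rule relating $D^k_x$ in the $u$-chart to $D^k_0$ in the $v$-chart, and it is replaced by the uniform Taylor estimate over a compact family of simplices. The only delicate point is making sure that the family $S$ really is compact and nondegenerate, which is exactly the content of \autoref{lemma:LimitCoordinateChangeSimplex}.
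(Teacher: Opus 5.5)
Your proposal is correct and follows the paper's proof essentially step for step. The shared steps are: rotation invariance to get $D^k_0$ vanishing in every chart, then reduction to full dimensional simplices with a vertex at the origin via translation invariance and simplicity, then the chart change through Lemma~\ref{lemma:LimitCoordinateChangeSimplex} combined with the uniform Taylor estimate of Corollary~\ref{corollary:ReformulationEstimateSimplex}. The only cosmetic difference is that you run the chart-change argument directly at an arbitrary $u$, using Step 1 for the target chart $v$. The paper instead runs it at $u_0$ and transfers the conclusion to all $u$ by $\SO(n+1)$-invariance at the end.
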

		\begin{proof}
			Due to Eq.~\eqref{eq:relationEquivariance}, we need to show that for every $u\in \S^n$, $x\in u^\perp$,
			\begin{align*}
				\frac{d^k}{dt^k}\Big|_0[G_{u*}\mu](tP+x)=	\frac{d^k}{dt^k}\Big|_0\mu(g^{u,x}_t(G_u^{-1}(P)))=0
			\end{align*}
			for all $P\in\calP(u^\perp)$ . First note that due to \autoref{corollary:ScalingMapsTransition}, we have for $V\in\SO(n+1)$,
			\begin{align*}
				D^k_x(G_{u*}\mu)[P]=&\frac{d^k}{dt^k}\Big|_0\mu(g^{u,x}_t(G_u^{-1}(P)))=\frac{d^k}{dt^k}\Big|_0\mu(V^{-1}Vg^{u,x}_tV^{-1}V(G_u^{-1}(P)))\\
				=&\frac{d^k}{dt^k}\Big|_0\mu(g^{Vu,Vx}_tV(G_u^{-1}(P))),
			\end{align*}
			as $\mu$ is $\SO(n+1)$-invariant. If we choose $V\in\SO(n+1)$ with $Vu=u_0$ and consider $x=0$, this implies
			\begin{align}
				\label{eq:DifferentialVanishInZeroEverywhere}
				\begin{split}
					D^k_0(G_{u*}\mu)[P]=&\frac{d^k}{dt^k}\Big|_0\mu(g^{Vu,0}_tG_{u_0}^{-1}G_{u_0}V(G_u^{-1}(P)))\\
					=&D^k_0(G_{u_0*}\mu)[G_{u_0}V(G_u^{-1}(P))]=0.
				\end{split}
			\end{align}
			Now let $\Delta\in \calP(\S^n_+(u_0))$ be a simplex with vertex $u_0\in \Delta$. For $v\in \calP(\S^n_+(u_0))$, the simplices in $v^\perp$ defined for $t>0$ small enough by
			\begin{align*}
				\Delta_t:=\frac{1}{t}\left[G_{v} (g_t^{u_0,G_{u_0}(v)}(\Delta_0))\right]
			\end{align*}
			converge to a full dimensional simplex $\Delta\in v^\perp$ with vertex $0\in \Delta$ by \autoref{lemma:LimitCoordinateChangeSimplex}. In particular, we may fix $t_0>0$ such that
			\begin{align*}
				S=\{\Delta_t:t\in(0,t_0]\}\cup \{\Delta\}
			\end{align*}
			is a compact family of full dimensional simplices with vertex in $0$. Since Eq.~\eqref{eq:DifferentialVanishInZeroEverywhere} holds for $u=v$,  \autoref{corollary:ReformulationEstimateSimplex} shows that for every $\epsilon>0$ there is $\delta>0$ such that
			\begin{align*}
				|G_{v*}\mu (t\Delta_t)|\le C\epsilon \diam\left(\Delta_t\right)^k t^k
			\end{align*}
			for all $t$ such that $\diam\left(t\Delta_t\right)<\delta$, where the constant $C$ is independent of $\epsilon$ and $\delta$. Since the diameter of $\Delta_t$ converges to $\diam(\Delta)$, this holds for all $t\in (0,\delta_0)$ for some $\delta_0>0$. Moreover, the diameters are bounded, so we find a constant $\tilde{C}>0$ independent of $\epsilon$ and $\delta_0$ such that
			\begin{align}
				\label{eq:EstimateSmallSimplex}
				|G_{v*}\mu (t\Delta_t)|\le \tilde{C}\epsilon t^k\quad \text{for}~t\in (0,\delta_0).
			\end{align}
			By assumption, $G_{u_0*}\mu\in W_k(u^\perp)$. In particular, we have
			\begin{align*}
				D^k_{G_{u_0}(v)}(G_{u_0*}\mu)[ G_{u_0}(\Delta_0)]=&k!\lim\limits_{t\rightarrow0}\frac{(G_{u_0*}\mu)[ tG_{u_0}(\Delta_0)+G_{u_0}(v)]}{t^k}\\
				=&k!\lim\limits_{t\rightarrow0}\frac{\mu(g_t^{u_0,G_{u_0}(v)}(\Delta_0))}{t^k}\\
				=&k!\lim\limits_{t\rightarrow0}\frac{(G_{v*}\mu)(G_v(g_t^{u_0,G_{u_0}(v)}(\Delta_0)))}{t^k}\\
				=&k!\lim\limits_{t\rightarrow0}\frac{[G_{v*}\mu](t\Delta_t)}{t^k}.
			\end{align*}
			From Eq.~\eqref{eq:EstimateSmallSimplex} we thus obtain
			\begin{align*}
				|	D^k_{G_{u_0}(v)}(G_{u_0*}\mu)[ G_{u_0}(\Delta_0)]|\le k!\tilde{C}\epsilon,
			\end{align*}
			where the constant does not depend on $\epsilon>0$. Since this holds for all $\epsilon>0$, this shows that $D^k_{G_{u_0}(v)}(G_{u_0*}\mu)$ vanishes in all full dimensional simplices of the form $G_{u_0}(\Delta_0)$ for any full dimensional simplex $\Delta_0$ in $\S^n_+(u_0)$ with vertex $u_0\in \Delta_0$. However, every full dimensional simplex in $u_0^\perp$ is a translate of a simplex of this form, so since $D^k_{G_{u_0}(v)}(G_{u_0*}\mu)$ is translation invariant by \autoref{proposition:PropertiesLambdaDifferential}, it vanishes in all full dimensional simplices. Since $\mu$ is a simple valuation, so is $D^k_{G_{u_0}(v)}(G_{u_0*}\mu)$, so by triangulating a given polytope, we see that  $D^k_{G_{u_0}(v)}(G_{u_0*}\mu)$ vanishes identically. Since this holds for all $v\in \S^n_+(u_0)$ and the gnomonic projection $G_{u_0}$ is bijective, this shows that $G_{u_0*}\mu\in W_{k+1}(u_0^\perp)$, compare \autoref{remark:VanishingDerivativeFiltration}. As $\mu$ is $\SO(n+1)$-invariant, we obtain $G_{u*}\mu\in W_{k+1}(u^\perp)$ for any $u\in \S^n$.
		\end{proof}
		\begin{remark}
			The previous results serves as a substitute for a proper chain rule: If we could interpret the map $P\mapsto\mu(g_t^{u_0,G_{u_0}(v)}(P))$ as a properly differentiable functional, we would expect that its differential should be given by both
			\begin{align*}
				D^k_{G_{u_0}(v)}[G_{u_0*}\mu](G_{u_0}(P))\quad \text{and}\quad D^k_{0}[G_{v*}\mu](F(P)),
			\end{align*}
			where the right hand side reflects the chain rule, i.e. $F$ is the derivative of $(t,y)\mapsto G_v(g_t^{u_0,G_{u_0}(v)}(y))$ with respect to $t$ in $t=0$, compare \autoref{lemma:LimitCoordinateChangeSimplex}, which would directly imply that the derivative vanishes. However, our differentiability property is too weak to allow for this interpretation, which is why we need to use more elementary tools to circumvent this problem. Informally, affine smooth valuations only allow for a restricted family of directional derivatives that are not compatible with the relevant changes of coordinates through gnomonic projections (which are fractional linear maps).
		\end{remark}

	\subsection{Characterization of $\SO(n+1)$-invariant valuations}
		In this section we combine the previous results to prove our main theorems. We begin with a version of \autoref{maintheorem:HadwigerSphereSimple} for $\GL(n+1,\R)$-smooth valuations.
		
		\begin{theorem}\label{theorem:ClassificationSimpleSmooth}
			Let $\mu\in \CV(\S^n)^{sm}$ be $\SO(n+1)$ invariant and simple. Then $\mu$ is a multiple of the spherical Lebesgue measure.
		\end{theorem}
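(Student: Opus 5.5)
We argue by induction on $k$ that $G_{u*}\mu\in W_k(u^\perp)$ for all $u\in\S^n$ and all $0\le k\le n$, and then conclude with \autoref{proposition:classificationWnDensity}. The base case $k=0$ is the preceding lemma, which shows that $G_{u*}\mu$ is affine smooth for every $u$.

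For the inductive step, assume $G_{u*}\mu\in W_k(u^\perp)$ for all $u$, with $k\le n-1$. By \autoref{lemma:KeyLemmaSimpleInvariant}, it suffices to find a single $u_0$ with $D^k_0(G_{u_0*}\mu)=0$. Fix any $u_0$. By \autoref{proposition:PropertiesLambdaDifferential}, $D^k_0(G_{u_0*}\mu)$ is a translation invariant, measurable valuation on $\calP(u_0^\perp)$, homogeneous of degree $k$. It is simple: $G_{u_0*}\mu$ is simple because $\mu$ is and $G_{u_0}^{-1}$ preserves dimension, and the defining limit $k!\lim_{t\to0^+} t^{-k}\mu(tP+x)$ then vanishes on lower dimensional $P$. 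It remains to check invariance under $\SO(u_0^\perp)$. For $V\in\SO(n+1)$ with $Vu_0=u_0$, we have $G_{u_0}\circ V=V\circ G_{u_0}$ on $\S^n_+(u_0)$, with $V$ acting linearly on $u_0^\perp$; this is immediate from the formula for $G_{u_0}$. Hence $G_{u_0*}\mu$ is invariant under this $\SO(n)$, and so is $D^k_0(G_{u_0*}\mu)$, since $V(tP)=tV(P)$. Alternatively, one reads this off from Eq.~\eqref{eq:DifferentialVanishInZeroEverywhere} with $u=u_0$. Since $k<n$, \autoref{proposition:simpleValuationsVanish} gives $D^k_0(G_{u_0*}\mu)=0$, and \autoref{lemma:KeyLemmaSimpleInvariant} yields $G_{u*}\mu\in W_{k+1}(u^\perp)$ for all $u$.

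After $n$ steps we have $G_{u*}\mu\in W_n(u^\perp)$, so by \autoref{proposition:classificationWnDensity} each $G_{u*}\mu$ is a smooth measure on $u^\perp$. Pulling back, $\mu$ restricted to $\calP(\S^n_+(u))$ is given by integrating a smooth density against spherical Lebesgue measure, since the gnomonic projection is a diffeomorphism. These local densities agree on overlaps: two measures that agree on all small simplices in an open set coincide there. So they glue to a global smooth function $\rho$ on $\S^n$ with $\mu(P)=\int_P\rho\,d\sigma$ for every polytope contained in some open hemisphere, which is every spherical polytope. By $\SO(n+1)$-invariance of $\mu$ and uniqueness of the density, $\rho$ is $\SO(n+1)$-invariant. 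Transitivity of $\SO(n+1)$ on $\S^n$ then forces $\rho$ to be constant, so $\mu$ is a multiple of the spherical Lebesgue measure.

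The main obstacle is the inductive step, specifically confirming that $D^k_0(G_{u_0*}\mu)$ satisfies all the hypotheses of \autoref{proposition:simpleValuationsVanish}. The delicate point is the rotation invariance: only rotations fixing $u_0$ commute with the gnomonic chart, and one must check that they act on $u_0^\perp$ by the standard linear action. The gluing at the end is routine once the densities are identified on overlapping charts.
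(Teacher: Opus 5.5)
Your proposal is correct and follows the paper's proof essentially step for step. Both arguments run an induction on $k$ using \autoref{proposition:PropertiesLambdaDifferential}, \autoref{proposition:simpleValuationsVanish} and \autoref{lemma:KeyLemmaSimpleInvariant}, then apply \autoref{proposition:classificationWnDensity} and glue the local densities using $\SO(n+1)$-invariance. The only difference is the starting point. The paper begins at $k=1$, since membership in $W_1$ just means $\mu(\{u\})=0$, which is immediate from simplicity. Starting at $k=0$ sends this trivial step through the key lemma, whose supporting estimate \autoref{lemma:EstimateSimplexVanishing} is proved via Taylor's formula only for $k\ge 1$. Nothing breaks, because the $k=0$ case of that estimate follows directly from continuity.
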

		\begin{proof}
			We will show by induction on $1\le k\le n$ that $G_{u*}\mu\in W_k(u^\perp)$ for all $u\in \S^n$. First, note that the claim holds for $k=1$ since $\mu$ is simple and thus vanishes on singletons. Thus assume that $k\le n-1$ and that $G_{u*}\mu\in W_k(u^\perp)$. We fix $u_0\in \S^n$. Then 
			\begin{align*}
				D^k_0(G_{u_0*}\mu):\calP(u_0^\perp)\rightarrow \R
			\end{align*}
			is translation invariant, measurable and $k$-homogeneous by \autoref{proposition:PropertiesLambdaDifferential}. By construction, it is also simple, and \autoref{lemma:EquivarianceAction} implies that it is $\SO(u^\perp)$-invariant (since we are considering the map at the origin in $u^\perp$). Since $k\le n-1$, \autoref{proposition:simpleValuationsVanish} shows that it vanishes identically. Thus \autoref{lemma:KeyLemmaSimpleInvariant} implies that $G_{u*}\mu\in W_{k+1}(u^\perp)$ for all $u\in\S^n$. This complete the induction.\\			
			We therefore obtain that $G_{u*}\mu\in W_n(u^\perp)$ for every $u\in \S^n$. By \autoref{proposition:classificationWnDensity}, $G_{u*}\mu$ is thus given by a smooth measure on $u^\perp$. We may pull this measure back to $\S^n_+(u)$ to see that the restriction of $\mu$ to $\calP(\S^n_+(u))$ is given by a smooth measure for all $u\in \S^n$. Thus these measures have to coincide on $\S^n_+(u)\cap \S^n_+(v)$ for all $u,v\in \S^n$, and we may glue them to a smooth measure on $\S^n$. Since $\mu$ is $\SO(n+1)$-invariant, so is this measure, so it is a multiple of the spherical Lebesgue measure. 
		\end{proof}
		
		\begin{proof}[Proof of \autoref{maintheorem:HadwigerSphereSimple}]
			If $\mu\in\CV(\S^n)^{\SO(n+1)}$ is simple, then we may choose a sequence $(\mu_j)_j$ of $\GL(n+1,\R)$-smooth valuations in $\CV(\S^n)^{\SO(n+1)}$ converging pointwise to $\mu$ by \autoref{corollary:DensitySmoothInvariant}. Moreover, we may assume that these valuations are simple. Thus \autoref{theorem:ClassificationSimpleSmooth} implies that $\mu_j=c_j V_n$ is a multiple of the spherical Lebesgue measure. Take $P_0\in\calP(\S^n)$ with $V_n(P_0)\ne 0$. Then 
			\begin{align*}
				\lim_{j\rightarrow\infty}c_j=\lim_{j\rightarrow\infty}\frac{\mu_j(P_0)}{V_n(P_0)}= \frac{\mu(P_0)}{V_n(P_0)}
			\end{align*}
			since the sequence $(\mu_j(P_0))_j$ converges to $\mu(P_0)$. In particular, for every $P\in\calP(\S^n)$,
			\begin{align*}
				\mu(P)=\lim\limits_{j\rightarrow\infty}\mu_j(P)=\lim\limits_{j\rightarrow\infty}c_j V_n(P)=\frac{\mu(P_0)}{V_n(P_0)}V_n(P).
			\end{align*}
			Thus $\mu$ is a multiple of the spherical Lebesgue measure.
		\end{proof}
		\autoref{maintheorem:HadwigerSphere} follows from \autoref{maintheorem:HadwigerSphereSimple} with a standard argument, which we include for completeness.
		\begin{proof}[Proof of \autoref{maintheorem:HadwigerSphere}]
			We use induction on the dimension $n$. For $n=1$ and $\mu\in \CV(\S^1)^{\SO(2)}$, the valuation $\tilde{\mu}=\mu-\mu(\{u_0\})V_0$ belongs to $\CV(\S^1)^{\SO(2)}$ and vanishes on singletons. Thus it is simple, and therefore  $\tilde{\mu}=c V_1$ is a multiple of the spherical Lebesgue measure by \autoref{maintheorem:HadwigerSphereSimple}, so $\mu=\mu(\{u_0\})V_0+cV_1$ is a linear combination of the spherical intrinsic volumes.\\
			
			Assume that the claim holds for all dimensions up to $n-1$. If $\mu\in \CV(\S^n)^{\SO(n+1)}$, consider the great sphere $\S(e_{n+1}^\perp)=\S^n\cap e_{n+1}^\perp$. Then the restriction of $\mu$ to $\calP(\S(e_{n+1}^\perp))$ defines a continuous and $\SO(e_{n+1}^\perp)$-invariant valuation. In particular, the induction hypothesis implies that there exist $c_0,\dots,c_{n-1}\in\R$ such that the restriction of $\mu$ to $\S(e_{n+1}^\perp)$ is given by $\sum_{j=0}^{n-1}c_j V_j|_{\S(e_{n+1}^\perp)}$. Note that this implies that 
			\begin{align*}
				\tilde{\mu}:=\mu-\sum_{j=0}^{n-1}c_j V_j
			\end{align*}
			vanishes on $\calP(\S(e_{n+1}^\perp))$, so since $\mu$ is $\SO(n+1)$ invariant, $\tilde{\mu}$ is a simple valuation. Thus $\tilde{\mu}$ satisfies the assumptions of \autoref{maintheorem:HadwigerSphereSimple}, so $\tilde{\mu}=c_nV_n$ is a multiple of the spherical Lebesgue measure, and we obtain
			\begin{align*}
				\mu=\tilde{\mu}+\sum_{j=0}^{n-1}c_j V_j=\sum_{j=0}^nc_jV_j.
			\end{align*}
			This completes the proof.
		\end{proof}

\bibliographystyle{plain}
\bibliography{../../library/library.bib}

\Addresses
	
\end{document}